\documentclass[11pt]{amsart}
\usepackage{graphicx}
\usepackage{amsmath}
\usepackage{amsfonts}
\usepackage{amssymb}
\usepackage{MnSymbol}
\usepackage{rotating}
\usepackage{amssymb}
\usepackage{gastex}
\usepackage[usenames]{color}

\vfuzz2pt 
\hfuzz2pt 
\newtheorem{thm}{Theorem}[section]
\newtheorem{cor}[thm]{Corollary}
\newtheorem{lem}[thm]{Lemma}




\newcommand{\abs}[1]{\left\vert#1\right\vert}

\def\abs#1{\ensuremath{\lvert #1\rvert}}
\newcommand{\C}{minimal non-nilpotent}

\begin{document}
\title[Minimal Non-Nilpotent Semigroups]{Finite semigroups that are minimal for not being Malcev nilpotent}

\author{E. Jespers and M.H. Shahzamanian}
\address{Eric Jespers and M.H. Shahzamanian\\ Department of Mathematics,
Vrije Universiteit Brussel, Pleinlaan 2, 1050 Brussel, Belgium}
\email{efjesper@vub.ac.be, m.h.shahzamanian@vub.ac.be}
\address{Current address M.H. Shahzamanian\\
Centro de Matematica Universidade do Porto, Rua do Campo Alegre 687, 4169-007 Porto, Portugal}
\email{ m.h.shahzamanian@fc.up.pt}
\thanks{ 2010
Mathematics Subject Classification. Primary 20F19, 20M07,
Secondary: 20F18.
 Keywords and
phrases: semigroup, nilpotent.
\\The research of the first author is  partially supported by
Onderzoeksraad of Vrije Universiteit Brussel, Fonds voor
Wetenschappelijk Onderzoek (Belgium). A large part of this work was done while the second author was working at Vrije Universiteit Brussel. The second  author also gratefully acknowledges support by FCT through the Centro de Matematica da Universidade
do Porto (Portugal).}

\begin{abstract}
We give a description of  finite semigroups  $S$ that are minimal
for not being Malcev nilpotent, i.e. every proper subsemigroup and every proper Rees factor semigroup is Malcev nilpotent but $S$ is not. For groups this question was considered by Schmidt.
\end{abstract}

\maketitle

\section{Introduction}\label{pre}

Finite groups $G$ that are minimal for not being nilpotent, i.e. $G$ is not nilpotent but every proper subgroup is nilpotent,   have  been characterized by  
Schmidt in \cite{Schmidt} (see also \cite[Theorem 6.5.7]{Scott} or \cite[Theorem Schmidt-Redei-Iwasawa]{Lausch}). 
For simplicity we call such a group $G$ a  \textit{Schmidt group}. 
It  has   the following  properties:
\begin{enumerate}
\item $\abs{G} =p^aq^b$ for some distinct primes  $p$ and   $q$ and some   $a, b >
0$.
\item  $G$ has a normal Sylow $p$-subgroup and the Sylow
$q$-subgroups are cyclic. 
\item The Frattini subgroups of Sylow
subgroups of $G$ are central in $G$. 
\item  $G$ is two-generated,
i.e. $G=\langle g_{1},g_{2}\rangle$ for some $g_{1},g_{2}\in G$.
\end{enumerate}

It is well known that  nilpotent groups can be defined by using semigroup identities (that is without using inverses) and hence there is a natural notion of nilpotent semigroup.
This was introduced by Malcev (\cite{malcev}),
and independently by Neuman and Taylor (\cite{neu-tay}). For completeness' sake we recall the definition.
For elements $x,y,z_{1},z_{2},\ldots $ in a semigroup $S$  one
recursively defines two sequences
$$\lambda_n=\lambda_{n}(x,y,z_{1},\ldots, z_{n})\quad{\rm and}
\quad \rho_n=\rho_{n}(x,y,z_{1},\ldots, z_{n})$$ by
$$\lambda_{0}=x, \quad \rho_{0}=y$$ and
$$\lambda_{n+1}=\lambda_{n} z_{n+1} \rho_{n}, \quad \rho_{n+1}
=\rho_{n} z_{n+1} \lambda_{n}.$$ A semigroup $S$ is said to be
\textit{nilpotent} (in the sense of Malcev \cite{malcev}, denoted
(MN) in \cite{Riley}) if there exists a positive integer $n$ such
that
$$\lambda_{n}(a,b,c_{1},\ldots, c_{n}) = \rho_{n}(a,b,c_{1},
\ldots, c_{n})$$ for all $a, b$ in $S$ and $ c_{1}, \ldots, c_{n}$
in $S^{1}$ (by $S^{1}$ we denote the smallest monoid containing $S$). The smallest such $n$ is called the nilpotency class
of $S$.
Note that, as in \cite{lal},
the defining condition  to be nilpotent  is a bit stronger than the one required
by Malcev in \cite{malcev}, who requires elements $w_{i}$ in $S$ only.
However the definitions agree on the class of cancellative semigroups.
Furthermore,   it is  shown that  a cancellative semigroup $S$ is nilpotent of class $n$
if and only if $S$ has a  two-sided group of quotients which is nilpotent of class $n$
      (see also \cite{Okninski}).  
Obviously, other examples of nilpotent semigroups are the power nilpotent semigroups,   that is,  semigroups $S$ with zero $\theta$ such that $S^{m}=\{ \theta \}$ for some $m\geq 1$.
In \cite{jespers-okninski99} it is shown that a completely $0$-simple semigroup $S$  over a maximal group $G$ is nilpotent if and only if $G$ is nilpotent and $S$ is an inverse semigroup. 
Of course subsemigroups and Rees factor semigroups of nilpotent semigroups are again nilpotent.
The class of $2$-nilpotent semigroups has been described in \cite{jespers-okninski99}; as for commutative semigroups they have a semilattice decomposition into Archimedean semigroups. 
For more information on this topic  we refer the reader to \cite{jespers-okninski99,jespers-okninski-book,Riley,Jes-shah2}.
In particular, in \cite{Jes-shah2}, we  describe a class of finite
semigroups that are near to being nilpotent, called pseudo
nilpotent semigroups. Roughly said, in these semigroups being  nilpotent  lifts through ideal chains.

In this paper we continue investigating  finite semigroups that are close to being nilpotent. Recall that a proper
Rees factor semigroup of a semigroup $S$ is a Rees factor semigroup $S/I$
with $I$ an ideal of $S$ of cardinality greater than $1$.
Obviously, every  finite semigroup that is not nilpotent has a subsemigroup that is minimal
for not being nilpotent, in the sense that  every proper subsemigroup and every Rees factor semigroup is nilpotent. 
We simply call such a semigroup a \textit{minimal non-nilpotent} semigroup.
The aim of this paper is to describe such semigroups and thus extend Schmidt's investigations to the class of finite semigroups.

The main result (Theorem~\ref{main-theorem})  is a classification (of sorts) of minimal non-nilpotent finite semigroups.  More specifically, it is shown that such a semigroup is either a Schmidt group or one of four types of semigroups which are not groups. These four types of semigroup are each the union of a completely $0$-simple inverse ideal and a $2$-generated subsemigroup or a cyclic group. It is also shown that not every semigroup of these four types is minimal non-nilpotent. The proof of the main theorem utilizes the fact that a minimal non-nilpotent semigroup $S$ which is not a group or a semigroup of left or right zeros has a completely $0$-simple inverse ideal $M$ and $S$ acts on the ${\mathcal R}$-classes of $M$. The different types of orbits of this action are analyzed to provide the classification in Theorem~\ref{main-theorem}.



For standard notations and terminology we refer to \cite{cliford}.
A completely $0$-simple finite semigroup $S$ is isomorphic with a
regular Rees matrix semigroup $\mathcal{M}^{0}(G, n,m;P)$, where
$G$ is a maximal subgroup of $S$, $P$ is the $m\times n$ sandwich
matrix with entries in $G^{\theta}$ and $n$ and $m$ are positive
integers. The nonzero elements of $S$ we denote by $(g;i,j)$, where $g\in
G$, $1\leq i \leq n$ and $1\leq j\leq m$; the zero
element is simply denoted $\theta$. The element  of $P$ on the
$(i,j)$-position we denote by $p_{ij}$. The set of nonzero elements we
denote by $\mathcal{M} (G,n,m;P)$. If all elements of $P$ are
nonzero then this is a semigroup and every completely simple
finite semigroup is of this form. If $P=I_{n}$, the identity
matrix, then $S$ is an inverse semigroup. By what is mentioned
earlier, a completely $0$-simple semigroup
$\mathcal{M}^{0}(G,n,m;P)$ is nilpotent if and only if $n=m$,
$P=I_{n}$ and $G$ is a nilpotent group [\cite{jespers-okninski99}, Lemma 2.1].

The outline of the paper is as follows.  In Section 2 we show that a finite minimal non-nilpotent semigroup is either  a Schmidt group, or a semigroup with $2$ elements of left or right zeros, or $S$ has an ideal  $M$ that is a completely $0$-simple inverse semigroup with nilpotent maximal subgroups. In the latter case we prove that $S$ acts on the ${\mathcal R}$-classes of $M$. 
Next the different types of orbits of this action are analyzed; three cases show up.  In Section 3 we deal with each of these cases separately. As a consequence, we obtain in Section 4 a description of finite minimal non-nilpotent semigroups.


\section{Properties of \C\ Semigroups}\label{non-nil}

We begin by showing that  a finite minimal non-nilpotent semigroup  is either a Schmidt group, a non-commutative semigroup with two elements or it has an ideal that is completely $0$-simple  inverse semigroup with nilpotent maximal subgroups.
The starting point of our investigations is the following necessary and sufficient condition for a finite semigroup not to
be nilpotent  \cite{Jes-shah}.

\begin{lem} \label{finite-nilpotent}
A finite semigroup $S$ is not nilpotent if and only if there exists a positive integer $m$, distinct elements $x, y\in S$ and elements 
  $ w_{1}, w_{2}, \ldots, w_{m}\in S^{1}$ such that $x = \lambda_{m}(x, y, w_{1}, w_{2}, \ldots, w_{m})$, $y = \rho_{m}(x,y, w_{1}, w_{2}, \ldots, w_{m})$.
\end{lem}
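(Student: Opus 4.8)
The plan is to reformulate the Malcev recursion as the iteration of a family of self-maps of $S\times S$ and then exploit finiteness. For each $z\in S^1$ define $f_z\colon S\times S\to S\times S$ by $f_z(a,b)=(azb,\,bza)$. A trivial induction on $n$ shows that, for all $x,y\in S$ and $z_1,\ldots,z_n\in S^1$,
$$\bigl(\lambda_n(x,y,z_1,\ldots,z_n),\ \rho_n(x,y,z_1,\ldots,z_n)\bigr)=\bigl(f_{z_n}\circ\cdots\circ f_{z_1}\bigr)(x,y).$$
Thus the conclusion of the lemma, namely $x=\lambda_m(x,y,w_1,\ldots,w_m)$ and $y=\rho_m(x,y,w_1,\ldots,w_m)$ with $x\neq y$, says exactly that $(x,y)$ is a periodic point of the composite $f_{w_m}\circ\cdots\circ f_{w_1}$ lying \emph{off} the diagonal $\Delta=\{(a,a):a\in S\}$. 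The structural fact that drives everything is that $\Delta$ is \textbf{absorbing}: since $f_z(a,a)=(aza,aza)\in\Delta$ for every $z$, once a trajectory $(\lambda_0,\rho_0),(\lambda_1,\rho_1),\ldots$ meets $\Delta$ it stays in $\Delta$. Consequently, whenever $\lambda_n\neq\rho_n$ for some inputs, the whole trajectory down to the start avoids $\Delta$; in particular the starting pair $(x,y)$ is already off-diagonal, so $x\neq y$.

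For the forward implication, suppose $S$ is not nilpotent. By definition, for every $n$ there are inputs with $\lambda_n\neq\rho_n$, i.e. a directed path of length $n$ in the graph on $S\times S$ with edges $p\to f_z(p)$ ($z\in S^1$) that ends off $\Delta$, and hence by absorption lies entirely in the finite set $D=(S\times S)\setminus\Delta$. Since $D$ is finite, arbitrarily long paths inside $D$ must repeat a vertex and therefore contain a directed cycle: there are a pair $(x,y)\in D$ and elements $w_1,\ldots,w_m\in S^1$ with $(f_{w_m}\circ\cdots\circ f_{w_1})(x,y)=(x,y)$. Reading this through the displayed identity yields distinct $x,y$ with $x=\lambda_m(x,y,w_1,\ldots,w_m)$ and $y=\rho_m(x,y,w_1,\ldots,w_m)$, as required.

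For the converse I would iterate the cycle. Given such $x\neq y$ and $w_1,\ldots,w_m$, repeating the block $w_1,\ldots,w_m$ exactly $k$ times returns $(x,y)$ to itself, so $\lambda_{km}=x\neq y=\rho_{km}$ for every $k\geq 1$. On the other hand, nilpotency of class $n$ propagates upward, since $\lambda_n=\rho_n$ holding identically forces $\lambda_{n+1}=\lambda_n z\rho_n=\rho_n z\lambda_n=\rho_{n+1}$; hence no index could witness nilpotency, and $S$ is not nilpotent. The only genuine subtlety — and the step I expect to carry the argument — is the absorption property of the diagonal, which upgrades the bare non-nilpotency hypothesis (trajectories merely failing to collapse at the prescribed length) into genuinely diagonal-free trajectories of unbounded length; after that, finiteness and the pigeonhole principle supply the cycle automatically.
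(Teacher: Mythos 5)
Your argument is correct: the reformulation via the maps $f_z(a,b)=(azb,\,bza)$, the absorption of the diagonal, and the pigeonhole extraction of a cycle in $(S\times S)\setminus\Delta$, together with the upward propagation of $\lambda_n=\rho_n$ for the converse, give a complete proof. Note that the paper itself does not prove this lemma but quotes it from the cited reference \cite{Jes-shah}, where the argument is essentially the same finiteness/pigeonhole reasoning you use, so your proof matches the standard one.
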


Recall that  if $S$ is a semigroup with an ideal $I$ such that both $I$ and $S/I$ are nilpotent semigroups then it does not
follow in general that $S$ is nilpotent. For counter examples we refer the reader to  \cite{jespers-okninski99}. However, if $I^{n}=\{ \theta\}$ (with $\theta$ the zero element of $S$) and $S/I$ is nilpotent then $S$ is a nilpotent semigroup. This easily
follows from the previous lemma.
%
%
%

%


It is easily verified that a finite semigroup of minimal cardinality that is  a \C\ semigroup but is not a  group is the semigroup of right or left zeros with $2$ elements.  
Obviously, these are bands. For convenience and in  order to have  a uniform notation for our main result (Theorem~\ref{main-theorem}) we will denote such   bands  respectively as  $U_1 =\{ e,f\}$, with $ef=f$, $fe=e$, and $U_2 =\{ e',f'\}$ with $e'f'=e'$,  $f'e'=f'$.  
They also can be described as the completely simple semigroups 
   $\mathcal{M}(\{e\},
1, 2; \left(\begin{matrix} 1
\\  1
\end{matrix} \right))$ and $\mathcal{M}(\{e\},
2, 1; (1,1))$.
The following result is a first step towards our classification result.
It turns out that these are precisely the \C\ finite semigroups that are completely simple.

\begin{lem} \label{starter}
Let $S$ be a finite semigroup. If $S$ is \C\ then one of the
following properties hold:
\begin{enumerate}
\item $S$ is  a \C\ group; 
\item $S$ is a semigroup of left or right zeros with 2 elements;
\item $S$ has a proper ideal
which is a completely $0$-simple inverse semigroup with nilpotent maximal subgroups, i.e. $S$ has an ideal isomorphic to $\mathcal{M}^{0}(G, n,n;I_{n})$ where $G$ is a nilpotent group and  $n\geq 2$.
\end{enumerate}
\end{lem}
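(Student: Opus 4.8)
The plan is to analyze $S$ through its minimal ideal (kernel) $K$, which in a finite semigroup always exists and is completely simple, say $K\cong\mathcal{M}(G,n,m;P)$. First I would record an elementary fact obtained by a direct idempotent computation: in any $\mathcal{M}(G,n,m;P)$ with $m\geq 2$ the nonzero idempotents $e_{ij}=(p_{ji}^{-1};i,j)$ lying in a common row multiply by $ab=b$, and with $n\geq 2$ those in a common column multiply by $ab=a$, so a completely simple semigroup that is not a group contains a two-element semigroup of left or right zeros. Since such a two-element band is not nilpotent, this is the engine of the whole argument. The case $K=S$ is then immediate: if $n=m=1$ then $S$ is a group, hence a minimal non-nilpotent group, giving (1); otherwise the two-element band $T$ just produced is a subsemigroup, and if it were proper it would have to be nilpotent by minimality, a contradiction, so $T=S$ and we land in (2).

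Next I would treat $K\subsetneq S$ proper and split on $\abs{K}$. If $\abs{K}>1$, then $K$ is a proper subsemigroup, hence nilpotent, and being completely simple it must (by the fact above) be a group. To rule this out I would invoke Lemma~\ref{finite-nilpotent} to get distinct $x,y$ and $w_{1},\dots,w_{m}\in S^{1}$ with $x=\lambda_{m}$ and $y=\rho_{m}$; taking $m$ at least the nilpotency class of the proper Rees factor $S/K$ forces $\lambda_{m}\equiv\rho_{m}\pmod K$, so, as $x\neq y$, both $x$ and $y$ lie in $K$. Writing $e$ for the identity of the group $K$ and using $k=ek$ for $k\in K$, each $w_{i}$ occurring in the word can be replaced by $w_{i}e\in K$, so that $x=\lambda_{m}$ and $y=\rho_{m}$ are evaluated entirely inside the nilpotent group $K$; this forces $x=y$, a contradiction. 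Hence $\abs{K}=1$, that is $K=\{\theta\}$ and $S$ has a zero.

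It remains to treat a semigroup with zero whose kernel is $\{\theta\}$, where I would pass to a $0$-minimal ideal $M$, which is either null or completely $0$-simple. If $M$ is null then $M^{2}=\{\theta\}$, and since $S/M$ is nilpotent the remark following Lemma~\ref{finite-nilpotent} makes $S$ nilpotent, a contradiction; thus $M\cong\mathcal{M}^{0}(G,n,m;P)$. The possibility $M=S$ is excluded by the same idempotent analysis applied to the nonzero idempotents of $M$: two of them in a common row or column yield a proper two-element band and hence nilpotency, while if none do then $P$ is monomial, forcing $n=m$ and $M\cong\mathcal{M}^{0}(G,n,n;I_{n})$, in which case a proper maximal subgroup $\cong G$ and the dichotomy $G$ nilpotent / not nilpotent both contradict the hypotheses. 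So $M$ is a proper subsemigroup, hence nilpotent, and the characterization cited from \cite{jespers-okninski99} gives $M\cong\mathcal{M}^{0}(G,n,n;I_{n})$ with $G$ nilpotent. Finally $n=1$ (i.e. $M\cong G^{0}$) is excluded by repeating the witness argument: when the nonzero part of $M$ has a two-sided identity $e$, one again replaces each $w_{i}$ by $w_{i}e$ and evaluates $\lambda_{m},\rho_{m}$ inside the nilpotent semigroup $M$, forcing $x=y$. Hence $n\geq 2$, which is exactly (3).

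The main obstacle is the witness-reduction step, and in particular the exclusion of the degenerate case $n=1$. The delicate point is to show that the only source of non-nilpotency compatible with a completely $0$-simple kernel-ideal $M$ is the action of $S$ permuting the $\mathcal{R}$-classes of $M$, an action that is trivial when there is a single $\mathcal{R}$-class; in the latter situation the Malcev word necessarily collapses into the nilpotent ideal $M$ itself. Establishing that this substitution $w_{i}\mapsto w_{i}e$ is legitimate precisely when the nonzero part carries a two-sided identity, together with confirming that nilpotency of $M$ pins the sandwich matrix to $I_{n}$ so that $M$ is inverse, is the technical heart of the proof.
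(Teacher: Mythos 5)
Your proposal is correct, and every key computation in it coincides with one in the paper's proof: the two-idempotent calculation that extracts a copy of $U_1$ or $U_2$ from a sandwich matrix with two nonzero entries in a row or column, the substitution $w_i \mapsto w_i e$ (the paper uses $e w_i$) that collapses the Malcev words into a maximal subgroup and contradicts its nilpotency, and the appeal to Lemma 2.1 of \cite{jespers-okninski99} to force $M\cong\mathcal{M}^0(G,n,n;I_n)$ with $G$ nilpotent. What differs is the scaffolding. The paper starts from the witnesses supplied by Lemma~\ref{finite-nilpotent}, locates them in a principal factor $S_i/S_{i+1}$ of a principal series, and observes that minimality forces $\abs{S_{i+1}}\leq 1$ and the factor to be completely ($0$-)simple; the fact that the witnesses sit in that factor is read off directly from the ideal structure of the words $\lambda_h,\rho_h$. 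You instead go straight to the bottom of $S$ (the kernel, then a $0$-minimal ideal over the zero) and push the witnesses down into it by comparing against the nilpotency class of the proper Rees factor $S/M$. Your organization dispatches the null $0$-minimal ideal and the group-with-zero degeneracies explicitly rather than implicitly, which is a small gain in clarity; the cost is one auxiliary fact you use but do not prove, namely that a fixed-point witness of length $m$ can be stretched to one of length $km$ for every $k$ (substitute $x=\lambda_m$, $y=\rho_m$ back into themselves and use $\lambda_{m+j}=\lambda_j(\lambda_m,\rho_m,\ldots)$), which is what legitimizes ``taking $m$ at least the nilpotency class of $S/K$.'' That is a one-line verification; with it stated, your descent argument, and hence the whole proof including the exclusion of $n=1$, is sound.
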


\begin{proof}
Since $S$ is finite, it has a principal series
$$S= S_1 \supset S_2 \supset \cdots \supset S_{h'} \supset S_{h'+1} = \emptyset .$$ That is, each
$S_i$ is an ideal of $S$ and there is no ideal of $S$ strictly
between $S_i$ and $S_{i+1}$ (for convenience we call the empty set
an ideal of $S$). Each principal factor $S_i / S_{i+1} (1 \leq i
\leq m)$ of $S$ is either completely $0$-simple, completely simple
or null.

Assume $S$ is \C. So, by  Lemma~\ref{finite-nilpotent}, there exist a positive integer $h$, distinct elements $s_1,\, s_2 \in S$ and elements $w_1, w_2, \ldots, w_h\in S^1$ such that
\begin{eqnarray} \label{simple-identity}
s_1 =
\lambda_{h}(s_1, s_2, w_{1}, w_{2}, \ldots, w_{h}) &\mbox{and} & s_2 =
\rho_{h}(s_1, s_2, w_{1}, w_{2}, \ldots, w_{h}).
\end{eqnarray}
Suppose that $s_1 \in S_i \backslash S_{i+1}$. 
Because $S_i$ and $S_{i+1}$ are ideals of $S$, the equalities (\ref{simple-identity}) imply that
   $s_2 \in S_i \backslash S_{i+1}$ and $w_{1}, w_{2}, \ldots, w_{h} \in S^1 \backslash S_{i+1}$. 
Furthermore, one obtains that $S_i / S_{i+1}$ is a completely $0$-simple, say $\mathcal{M}^{0}(G, n,m;P)$, or a
completely simple semigroup, say $\mathcal{M}(G, n,m;P)$. 
Also, since $S$ is \C, $S_{i+1}=\emptyset$ or $S_{i+1}=\{ \theta \}$.

If $n+m=2$ then $S_i \backslash S_{i+1}$ is a group. We denote by $e$ its  identity element. 
Since $s_1,s_2 \in S_i \backslash S_{i+1}$, the sequences $s_1 =
\lambda_{h}(s_1, s_2, w_{1}, w_{2}, \ldots, w_{h})$, $s_2 = \rho_{h}(s_1,
s_2, w_{1}, w_{2}, \ldots, w_{h})$ imply that 
$\lambda_j, \rho_j \in S_{i}\backslash S_{i+1}$ for $1 \leq j \leq h$.
Now, as $e$ is the identity element of $S_{i}\backslash S_{i+1}$,
$\lambda_j w_{j+1} \rho_i=\lambda_j e w_{j+1} \rho_i$ and $\rho_j w_{j+1} \lambda_j=\rho_ie w_{j+1}
 \lambda_j$ for $0 \leq j \leq h-1$. If $S_{i+1} = \{\theta\}$ and $ew_{j+1}= \theta$ for some $0 \leq j \leq h-1$, then $s_1=s_2=\theta$, in  contradiction with $s_1 \neq s_2$.
So, $ew_{j+1} \in S_i \backslash S_{i+1}$ for $0 \leq j \leq h-1$. Consequently
$$s_1 =\lambda_{h}(s_1, s_2, w_{1}, w_{2}, \ldots, w_{h})=\lambda_{h}(s_1, s_2, ew_{1}, ew_{2},
\ldots, ew_{h}) $$ $$\neq s_2 = \rho_{h}(s_1,s_2, w_{1}, w_{2}, \ldots, w_{h})=\rho_{h}(s_1,s_2, ew_{1},
ew_{2}, \ldots, ew_{h}).$$
From
Lemma~\ref{finite-nilpotent} it follows that $S_i \backslash S_{i+1}$ is a group that is not nilpotent.
Hence, since $S$
is \C, $S=S_{i}\backslash S_{i+1}$ and so $S$ is a \C\ group.

Suppose  $n+m>2$. If $S_{i}/S_{i+1}$ is not nilpotent then (by the results mentioned in the introduction ([\cite{jespers-okninski99}, Lemma 2.1]) a column or row of $P$ has two nonzero elements. Without loss of generality, we may suppose this is either the first column or the first row.
If   $p_{i1}$ and
$p_{j1}$ are nonzero with $i \neq j$ then it is  easily verified that the subsemigroup $\langle (p_{i1}^{-1};1,i),\, (p_{j1}^{-1};1,j)\rangle$ is
isomorphic with the \C\ semigroup $U_{1}$. If, on the other hand,  the first row of $P$ contains two nonzero elements, then the semigroup is  isomorphic with the \C\ semigroup $U_{2}$.
So,  $S$ is a semigroup of  right or left zeros with two elements.

The remaining case is $n+m>2$ and $S_{i}/S_{i+1}$ is a nilpotent
semigroup. Again by [\cite{jespers-okninski99}, Lemma 2.1], in this case,
$S_{i}/S_{i+1}=\mathcal{M}^{0}(G, n,n;I_{n})$ with $G$ a nilpotent
group. Since $\abs{S_{i+1}} \leq 1$, the result follows.
%
\end{proof}

In order to obtain a classification, we thus assume throughout the remainder
of this section that $S$ is a finite
\C\ semigroup  that has a proper  ideal $M=\mathcal{M}^{0}(G,n,n;I_{n})$
with $G$ a nilpotent group and $n>1$. 
To further refine our way towards a classification, we introduce an action of
$S$ on the ${\mathcal R}$-classes of $M$, i.e.
we define  a representation (a semigroup homomorphism)
  $$\Gamma : S\longrightarrow \mathcal{T}_{\{1, \ldots, n\} \cup \{\theta\}} ,$$
where $\mathcal{T}$ denotes  the full
transformation semigroup $\mathcal{T}_{\{1, \ldots, n\} \cup
\{\theta\}}$ on the set $\{1, \ldots, n\} \cup \{\theta\}$.
The definition is as follows, for $1\leq i\leq n$ and $s\in S$,
   $$\Gamma(s)(i) = \left\{ \begin{array}{ll}
      i' & \mbox{if} ~s(g;i,j)=(g';i',j)  ~ \mbox{for some} ~ g, g' \in G, \, 1\leq j \leq n\\
       \theta & \mbox{otherwise}\end{array} \right.$$
and
      $$\Gamma (s)(\theta ) =\theta .$$

We call $\Gamma$ a \C\ representation of $S$  and $\Gamma(S)$ a \C\
image of $S$. It is easy to
check that $\Gamma$ is well-defined and that it is a semigroup homomorphism.

Also, for every $s \in S$, we  define a map
\begin{eqnarray} \label{map-psi}
\Psi(s): \{1, \ldots, n\}\cup \{\theta\} & \longrightarrow &G^{\theta}
\end{eqnarray}
as follows
$$\Psi(s)(i)=g  \;\; \;\; \mbox{ if } \Gamma(s)(i) \neq \theta
\mbox{ and } s(1_G;i,j)=(g;\Gamma(s)(i),j)$$ for some  $1\leq j \leq
n,$ otherwise $\Psi(s)(i)=\theta$. It is straightforward to verify
that $\Psi$ is well-defined.

Note that if $\Psi(s)(i)=g$ and $g \in G$ then
$s(h;i,j)=(gh;\Gamma(s)(i),j)$ for every $h \in G$. Also if
$\Psi(st)(i)=g$, $\Psi(t)(i)=g'$ and $\Psi(s)(\Gamma(t)(i))=g''$, then
$g=g''g'$. Hence it follows that 
  $$\Psi (st) =(\Psi (s) \circ \Gamma (t) ) \; \Psi (t).$$

We claim that for $s\in S$ the map $\Gamma (s)$ restricted to the domain $S\backslash \Gamma(s)^{-1}(\theta )$ is injective.
Indeed,   suppose $\Gamma(s)(m_1) = \Gamma(s)(m_2)= m$ for $1
\leq m_1, m_2, m \leq n$. Then there exist  $g,g',h,
h' \in G$, $1 \leq l, l' \leq n$ such that $s(g;m_1,l)=(g';m,l)$ and
$s(h;m_2,l')=(h';m,l')$. Hence
$$(1_G;m,m)s(g;m_1,l)=(g';m,l),\;\;
(1_G;m,m)s(h;m_2,l')=(h';m,l')$$ and thus
$$(1_G;m,m)s= (x;m,m_1)= (x';m,m_2)$$ for some $x, x' \in
G$. It implies that $m_1= m_2$, as required.

It follows that if $\theta \not\in \Gamma(s) (\{ 1, \ldots, n\})$ then $\Gamma (s)$ induces a permutation on $\{ 1, \ldots,
n\}$ and we may write $\Gamma (s)$ in the disjoint cycle notation
(we also write cycles of length one). In the other case, we may
write $\Gamma (s)$ as a product of disjoint cycles of the form
$(i_{1}, i_{2}, \ldots, i_{k})$  or of the form $(i_{1}, i_{2},
\ldots, i_{k}, \theta )$, where $1\leq i_{1}, \ldots, i_{k}\leq n$.
The notation for  the latter cycle means that $\Gamma (s)(i_{j})=i_{j+1}$ for $1\leq
j\leq  k-1$, $\Gamma  (s)(i_{k})=\theta$, $\Gamma  (s)(\theta ) =\theta$ and
there does not exist $1\leq r \leq n$ such that  $\Gamma
(s)(r)=i_{1}$. We also agree that letters $i,j,k$ represent
elements of $\{ 1, \ldots, n\}$, in other words we write
explicitly $\theta$ if the zero appears in a cycle.
Another agreement we make is that we do not write cycles of the form $(i, \theta )$ in
the decomposition of  $\Gamma (s)$  if $\Gamma(s)(i) =\theta$ and
$\Gamma(s)(j) \neq i$ for every $1\leq j \leq n$ (this is the reason for writing cycles of length one).  If $\Gamma(s)(i)
=\theta$ for every $1\leq i \leq n$, then we simply denote $\Gamma
(s)$ as $\theta$.

For convenience, we also introduce the following notation. 
If the cycle $\varepsilon$ appears in the expression of  $\Gamma(s)$ as
product of disjoint cycles then we denote this by $\varepsilon
\subseteq \Gamma(s)$. If $\Gamma(s)(i_1)=i_1', \ldots,
\Gamma(s)(i_m)=i_m'$ then we write  $$[\ldots, i_1, i_1', \ldots,
i_2, i_2', \cdots, \ldots, i_m, i_m', \ldots] \sqsubseteq
\Gamma(s).$$

It can be easily verified that if $g\in G$ and $1 \leq n_1, n_2 \leq n$ with
$n_1 \neq n_2$ then
\begin{eqnarray} \label{elements-of-ideal}
\Gamma((g;n_1,n_2)) = (n_2,n_1,\theta) &\mbox{  and }& \Gamma((g;n_1,n_1)) = (n_1) .
\end{eqnarray}
Further, for $s,t\in S$, $g\in G$ and $1\leq i \leq n$, if
$(\ldots, o, m, k,
\ldots) \subseteq \Gamma(s)$ and $(m_1, \ldots, m_2,$ $ \theta)
\subseteq \Gamma(t)$ then
$$s(g;m,i)=(g';k,i), \;\; t(g;m_2,i)=\theta ,$$
for some $g' \in G$.
Since $s(g; o, i) = (g''; m, i)$ for some $g'' \in G$ we obtain that
$$(g; i,m)s(g; o, i) = (g; i, m)(g''; m, i)= (gg''; i, i)$$ and thus $(g; i,m)s(g;
o, i)\neq \theta$. Hence,   there exists $k \in G$ such that
 $$(g; i,m)s = (k; i, o).$$
We claim that  $$(g;i,m_1)t=\theta .$$
Indeed,
suppose this is not the case. Then $(g;i,m_1)t=(g';i,m_3)$ for
some $g'\in G$ and some $m_{3}$. Hence, $(g;i,m_1)t(1_G;m_3,m_3)
\neq \theta$ and $t(1_G;m_3,m_3) \neq \theta$. So
$\Gamma(t)(m_3)=m_1$, a contradiction.

In the following lemma we analyze the   orbits of this action. 
Three cases show up.

\begin{lem}\label{nilpotency}
Let $S$ be a finite \C\ semigroup. Suppose $S$  has proper  ideal $M =
\mathcal{M}^0(G,n,n;I_{n})$ with  $G$ a nilpotent group and $n\geq 2$. Then, there
exist elements $w_1$ and $w_2$ of $S \backslash M$  such that one of the following properties holds:
\begin{enumerate}
\item[(i)] $(m,l) \subseteq \Gamma(w_1), (m)(l) \subseteq \Gamma(w_2), $
\item[(ii)] $(\ldots, m,l,m', \ldots) \subseteq \Gamma(w_1),  (l)(\ldots, m,
m',\ldots)\subseteq \Gamma(w_2),$
\item[(iii)] $[\ldots, k,m, \ldots, l,k', \ldots]\sqsubseteq \Gamma(w_1), [\ldots, l, m, \ldots, k, k', \ldots] \sqsubseteq \Gamma(w_2)$,
\end{enumerate}
for some pairwise distinct numbers $l, m, m',k$ and $k'$ between $1$
and $n$.
\end{lem}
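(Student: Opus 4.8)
The plan is to start from the non-nilpotency witness supplied by Lemma~\ref{finite-nilpotent}: there are distinct $s_1,s_2\in S$ and $w_1,\ldots,w_h\in S^1$ with $s_1=\lambda_h(s_1,s_2,w_1,\ldots,w_h)$ and $s_2=\rho_h(s_1,s_2,w_1,\ldots,w_h)$. First I would localise this witness inside $M$. Since $\abs{M}>1$, the Rees quotient $S/M$ is a proper factor, hence nilpotent; applying the canonical homomorphism $S\to S/M$ to the two identities and invoking Lemma~\ref{finite-nilpotent} inside $S/M$ shows that the images of $s_1$ and $s_2$ must coincide. As the quotient map is injective off $M$, this forces $s_1,s_2\in M$, say $s_1=(g_1;a_1,b_1)$ and $s_2=(g_2;a_2,b_2)$, both nonzero (a short separate argument rules out $s_i=\theta$).

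Next I would read off how the rows and columns behave along the chain. Since $s_1=\lambda_h\neq\theta$ and $s_2=\rho_h\neq\theta$, a downward induction shows every $\lambda_k$ and $\rho_k$ is nonzero. Because the row of a nonzero product in $M$ is inherited from its left-most factor, all $\lambda_k$ lie in row $a_1$ and all $\rho_k$ in row $a_2$; because the column is inherited from the right-most factor, the columns of $\lambda_k$ and $\rho_k$ alternate between $b_1$ and $b_2$. The crucial point is that, since $P=I_n$, the grouping $\lambda_{k+1}=\lambda_k\,(w_{k+1}\rho_k)$ is nonzero only when the column of $\lambda_k$ equals $\Gamma(w_{k+1})(a_2)$, and likewise $\rho_{k+1}=\rho_k\,(w_{k+1}\lambda_k)$ is nonzero only when the column of $\rho_k$ equals $\Gamma(w_{k+1})(a_1)$. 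Spelling this out yields, for odd $j$, $\Gamma(w_j)(a_1)=b_2$ and $\Gamma(w_j)(a_2)=b_1$, while for even $j$, $\Gamma(w_j)(a_1)=b_1$ and $\Gamma(w_j)(a_2)=b_2$; one also reads off that $h$ is even, so both parities occur among $w_1,\ldots,w_h$.

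The main obstacle is to prove $a_1\neq a_2$. Suppose instead $a_1=a_2=a$; then injectivity of $\Gamma(w_j)$ off $\theta$ forces $b_1=b_2=b$, so every $\lambda_k,\rho_k$ sits in the single $\mathcal{H}$-class of row $a$ and column $b$, and $\Gamma(w_j)(a)=b$ for all $j$. Setting $t_j=\Psi(w_j)(a)\in G$ and using $s(h;i,j)=(\Psi(s)(i)\,h;\Gamma(s)(i),j)$ together with $p_{bb}=1_G$, I would prove by induction that $\lambda_k=(\Lambda_k;a,b)$ and $\rho_k=(R_k;a,b)$, where $\Lambda_k,R_k$ are precisely the Malcev sequences in the group $G$ built from $g_1,g_2$ and the $t_j$. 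Since $G$ is nilpotent, $\Lambda_h=R_h$, whence $s_1=s_2$, a contradiction. Thus $a_1\neq a_2$, and then $b_1\neq b_2$ by injectivity as well.

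Finally I would split according to the size of $\{a_1,a_2\}\cap\{b_1,b_2\}$, which is $2$, $1$, or $0$, and match these to cases (i), (ii), (iii) respectively by reading the cycle structure of $\Gamma(w_j)$ directly off the four values; here one is free to relabel which witness plays the role of $w_1$ and which of $w_2$, and to choose the distinguished indices $l,m,m',k,k'$ among $a_1,a_2,b_1,b_2$. It remains to guarantee the chosen witnesses lie in $S\backslash M$: since $a_1\neq a_2$, no $w_j$ with the action above can lie in $M$, for by~(\ref{elements-of-ideal}) its $\Gamma$-image would send at most one index to a nonzero value. The only possible defect is $w_j=1$. If every odd (respectively every even) witness equals $1$, one is forced into the case $\{a_1,a_2\}=\{b_1,b_2\}$, and some remaining witness $w\in S\backslash M$ acts as the transposition of $a_1$ and $a_2$; then $w$ together with $w^2$ (which again lies in $S\backslash M$, else its rank-two $\Gamma$-image would collapse) supplies the pair needed for case (i). Not every $w_j$ can be $1$, for otherwise the whole chain would sit inside the nilpotent ideal $M$, forcing $s_1=s_2$.
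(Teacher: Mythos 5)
Your argument is correct and follows essentially the same route as the paper's proof: localise $s_1,s_2$ in $M$ via nilpotency of $M$ and $S/M$, extract the constraints $\Gamma(w_1)(a_1)=b_2$, $\Gamma(w_1)(a_2)=b_1$, $\Gamma(w_2)(a_1)=b_1$, $\Gamma(w_2)(a_2)=b_2$ from the nonvanishing of the chain, exclude the single-${\mathcal H}$-class configuration using nilpotency of $G$, and classify by the coincidences among $a_1,a_2,b_1,b_2$. Your explicit treatment of the possibility $w_j=1\in S^1$ (replacing a transposition $w$ by the pair $w,w^2$) is a detail the paper's proof leaves implicit, but the underlying case analysis is the same.
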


\begin{proof} Because of Lemma~\ref{finite-nilpotent} there exists
a positive integer $h$, distinct elements $s_1 , s_2\in S$ and
elements $w_{1}, w_{2}, \ldots, w_{h}\in S^{1}$ such that $s_1 =
\lambda_{h}(s_1, s_2 , w_{1},$ $ w_{2}, \ldots, w_{h})$, $s_2 =
\rho_{h}(s_1, s_2, w_{1}, w_{2}, \ldots, w_{h})$. Note that both
$s_{1}$ and $s_{2}$ are nonzero. Because the semigroups $M$ and
$S /M$ are nilpotent, $\{ s_{1} , s_{2} , w_1, $ $\ldots, w_h
\} \cap M \neq \emptyset$ and $\{ s_{1} , s_{2} , w_1,
\ldots, w_h \} \cap (S \backslash M) \neq \emptyset$. It
follows that $s_{1},s_{2}\in M$ and that there exist $1 \leq
n_1,n_2,n_3,n_4 \leq n$ and  $g, g' \in G$ such that $s_1 =(g;
n_1, n_2), \, s_2 = (g'; n_3, n_4)$. Hence
$$[\ldots, n_3,n_2, \ldots, n_1,n_4, \ldots] \sqsubseteq \Gamma(w_1),
[\ldots, n_1, n_2, \ldots, n_3, n_4, \ldots] \sqsubseteq
\Gamma(w_2).$$
Here we agree that we take $w_2=w_1$ in case $h=1$.

If $(n_1,n_2)=(n_3,n_4)$ (for example in the case that $h=1$)
then, there exist $k_{i} \in G$ such that $(k;\alpha,n_2)w_i=
(kk_{i};\alpha,n_1) \text{ for every } k \in G \text{ and } \alpha
\in \{n_1,n_2\}.$ Since  $\lambda_{i-1}= (g_{i-1};n_1,n_2)$,
$\rho_{i-1}= (g'_{i-1};n_1,n_2)$, for some $g_{i-1},g_{i-1}'\in
G$, we  get that   $$\lambda_{i}= (g_{i-1}k_ig'_{i-1};n_1,n_2),
\rho_{i}= (g'_{i-1}k_ig_{i-1};n_1,n_2)$$
 and thus
$$g = \lambda_{h}(g, g', k_{1}, \ldots, k_{h}), g' =
\rho_{h}(g, g',k_{1}, \ldots, k_{h}).$$  Because of Lemma~\ref{finite-nilpotent}, this yields a
contradiction with $G$ being nilpotent. So we have shown that $(n_{1},n_{2})\neq (n_{3},n_{4})$. In particular, we obtain that $h>1$.

We deal with two mutually exclusive cases.

(Case 1) $n_1=n_2=l$. Since $[\ldots, n_1, n_2, \ldots, n_3, n_4,
\ldots]\sqsubseteq \Gamma(w_2)$, it is impossible that $n_3=l, n_4
\neq l$ or $n_3 \neq l,  n_4 = l$. As
$(n_1,n_2)\neq (n_3,n_4)$ we thus obtain that  $n_3 \neq l$ and $n_4 \neq l$. Consequently,
$$[\ldots, m,l, \ldots, l,m, \ldots] \sqsubseteq \Gamma(w_1),~
[\ldots, l, l, \ldots, m, m, \ldots] \sqsubseteq \Gamma(w_2)$$
$$\mbox{or~}[\ldots, m,l, \ldots, l,m', \ldots] \sqsubseteq \Gamma(w_1),~
[\ldots, l, l, \ldots, m, m', \ldots]\sqsubseteq \Gamma(w_2)$$ and
thus $$(m,l) \subseteq  \Gamma(w_1), (l)(m)\subseteq  \Gamma(w_2)$$
$$\mbox{or~} (\ldots, m,l,m', \ldots)\subseteq  \Gamma(w_1),
(l)(\ldots, m, m', \ldots)\subseteq  \Gamma(w_2)$$ for the pairwise
distinct numbers $l, m$ and $m'$.

(Case 2) $n_1 \neq n_2$ and $n_3\neq n_4$ (the latter because otherwise, by symmetry reasons, we are as in Case 1).
We obtain five possible cases:\\
\begin{flushleft} $(1)[n_2=n_3=m]:$\end{flushleft}
$$[\ldots, m,m, \ldots, n_1,n_4, \ldots] \sqsubseteq
\Gamma(w_1),~ [\ldots, n_1, m, \ldots, m, n_4, \ldots]\sqsubseteq 
\Gamma(w_2),$$
$(2)[n_2=n_4=m]:$ $$[\ldots, n_3,m, \ldots, n_1,m, \ldots]\sqsubseteq
\Gamma(w_1),~ [\ldots, n_1, m, \ldots, n_3, m, \ldots]\sqsubseteq
\Gamma(w_2),$$
$(3)[n_1=n_3=m]:$ $$[\ldots, m,n_2, \ldots, m,n_4, \ldots]\sqsubseteq
\Gamma(w_1),~ [\ldots, m, n_2, \ldots, m, n_4, \ldots]\sqsubseteq
\Gamma(w_2),$$
$(4)[n_1=n_4=m]:$ $$[\ldots, n_3,n_2, \ldots, m,m, \ldots]\sqsubseteq
\Gamma(w_1),~ [\ldots, m, n_2, \ldots, n_3, m, \ldots]\sqsubseteq
\Gamma(w_2),$$
$(5):$
$$[\ldots, k,m, \ldots, l,k', \ldots]\sqsubseteq \Gamma(w_1),~
[\ldots, l, m, \ldots, k, k', \ldots]\sqsubseteq \Gamma(w_2)$$ for
some pairwise distinct positive integers  $l, m,k,k'\leq n$.

Cases one and four are as in (ii)  of the
statement of the lemma. Note that cases two and three are not possible, since $(n_1, n_2) \neq (n_3, n_4)$ and the restriction of $\Gamma(w_1)$ to $\{ 1, \ldots, n
\} \backslash  \Gamma(w_1)^{-1}(\theta )$ is an
injective map.
Case five is one of the desired options.

Finally, because of (\ref{elements-of-ideal}) we know how the
elements of $M$ are written as products of disjoint cycles. Hence
it is easily seen that $w_1,\, w_2\in (S\backslash M)$.
\end{proof}

\section{Three types of semigroups}

In this section we deal with each of the cases listed in Lemma~\ref{nilpotency}.
For the first case we obtain the following description.

\begin{lem}\label{nilpotency-case1}
Let $S$ be a finite \C\ semigroup. Suppose   $M =
\mathcal{M}^0(G,n,n;I_{n})$ is a proper ideal, with  $G$ a nilpotent group and $n\geq 2$.
If
there exists $u\in S \backslash M$ such that
$(m,l) \subseteq \Gamma(u)$, 
then
 $$S=\mathcal{M}^0(G,2,2;I_{2}) \cup \langle u \rangle,$$
a disjoint union, and
\begin{enumerate}
\item $\langle u\rangle$ a cyclic group of order $2^{k}$,
\item   $u^{2^{k}}=1$ is the identity of $S$,
\item $\Gamma(u)=(1,2)$ and $\Gamma(1)=(1)(2)$,
\item $G=\langle \Psi(u)(1), \; \Psi(u)(2) \rangle$,
\item  $(\Psi(u)(1)\; \Psi(u)(2))^{2^{k-1}}=1$.
\end{enumerate}
A semigroup $S=\mathcal{M}^0(G,2,2;I_{2}) \cup \langle u \rangle$ that satisfies these five properties is said to be  of  type $U_{3}$.

Furthermore, 
$S= \langle
(g;i,j), u\rangle$ for any (nonzero)  $(g;i,j) \in \mathcal{M}^0(G,2,2;I_{2})$. 
\end{lem}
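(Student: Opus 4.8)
The plan is to prove that $T:=\langle (g;i,j),u\rangle$ equals $S$ by exploiting minimality, rather than by a direct generation argument. The first thing to appreciate is that one \emph{cannot} hope to generate all of $M$ directly: multiplying $(g;i,j)$ by powers of $u$ on either side and forming products only produces, inside the corner $C:=\{(h;1,1):h\in G\}$, the subgroup generated by the given group coordinate together with $\Psi(u)(2)\Psi(u)(1)$, which is in general a proper subgroup of $G$ (left and right $u$-excursions that return to $C$ always contribute factors lying in $\langle \Psi(u)(2)\Psi(u)(1)\rangle$). So the argument must instead use that $S$ is \C: every proper subsemigroup is nilpotent, hence any non-nilpotent subsemigroup of $S$ must be all of $S$.

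First I would record that $\langle u\rangle\subseteq T$ (as $u$ has finite order), so $T$ is a submonoid containing $1$, and that $e:=(1_G;1,1)\in T$. Indeed, applying suitable powers of $u$ on the left and right of $(g;i,j)$ lands a nonzero element in $C$ (each such multiplication only toggles an $\mathcal R$- or $\mathcal L$-class and multiplies the group coordinate by some $\Psi(u)(\cdot)\in G$, so it never yields $\theta$); thus $T\cap C$ is a nonempty subsemigroup of the finite group $C\cong G$, hence a subgroup, and in particular contains the identity $e$. Consequently $T\supseteq\langle e,u\rangle$, and it suffices to prove $\langle e,u\rangle=S$.

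To this end I would show $\langle e,u\rangle$ is not nilpotent and invoke minimality. Write $a=\Psi(u)(1)$, $b=\Psi(u)(2)$; by property (5) the order of $ab$, and hence of its conjugate $ba$, divides $2^{k-1}$, say it equals $2^{r}$, so $3$ is invertible modulo $2^{r}$. Put $c\equiv-3^{-1}\pmod{2^{r}}$ and set
$$s_1=((ba)^{c};1,1),\qquad s_2=((ab)^{c};2,2).$$
Both lie in $\langle e,u\rangle$, since $u^{2t}e=((ba)^{t};1,1)$ while $u\,e\,u=(ab;2,2)$, so $u^{2}$ yields all powers of $ba$ and of $ab$ in the two diagonal corners; moreover $s_1\neq s_2$ as they sit in different $\mathcal R$-classes. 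Taking $w_1=u$, $w_2=1$ and computing the Malcev sequences, the identities $b(ab)^{s}a=(ba)^{s+1}$ and $a(ba)^{s}b=(ab)^{s+1}$ give
$$\lambda_2=((ba)^{4c+1};1,1),\qquad \rho_2=((ab)^{4c+1};2,2),$$
and since $4c+1\equiv c\pmod{2^{r}}$ we obtain $\lambda_2=s_1$ and $\rho_2=s_2$. By Lemma~\ref{finite-nilpotent}, $\langle e,u\rangle$ is not nilpotent.

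The main step is the non-nilpotency witness of the previous paragraph; everything else is bookkeeping. Its delicate feature is that the exponent recursion $c\mapsto 4c+1$ must close up modulo the order of $ba$, and this is possible precisely because that order is a power of $2$ (so that $3$ is a unit) — which is exactly where property (5) is needed, and is also why the naive choice $s_1=e,\ s_2=(1_G;2,2)$ fails, as it would force $(ba)^{\mathrm{odd}}=1$. Granting non-nilpotency, minimality of $S$ forces the proper-or-not subsemigroup $\langle e,u\rangle$ to be non-proper, i.e.\ $\langle e,u\rangle=S$, and therefore $T=S$; as a byproduct one reads off that $G=\langle ba\rangle$ is cyclic.
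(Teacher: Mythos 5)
Your proposal addresses only the final ``Furthermore'' clause of the lemma; the main body --- that $n=2$, that $S=\mathcal{M}^0(G,2,2;I_{2}) \cup \langle u \rangle$ is a disjoint union, and properties (1)--(5) --- is nowhere established. This is not a cosmetic omission: your non-nilpotency witness leans essentially on facts you have not proved. You need $\Gamma(u)=(1,2)$ with $\Psi(u)(1)=a$, $\Psi(u)(2)=b$ acting as described (which presupposes $n=2$ and the full multiplication rule), you need $u^{2^k}=1$ to be the identity of $S$ so that $\langle u\rangle\subseteq T$ is a group, and above all you need property (5): your exponent recursion $c\mapsto 4c+1$ closes up only because $3$ is a unit modulo the order of $ba$, which you justify by citing (5) --- one of the very conclusions of the lemma. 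The paper derives these facts in sequence (non-nilpotency of $\langle u,(g;m,l),(g;l,m)\rangle$ forces $S$ to equal it; $\langle u\rangle\cap M=\emptyset$ via the cycle shapes of $\Gamma(u')$; replacing $u$ by odd powers $u^{k_1}$ forces $|\langle u\rangle|=2^k$; the computation $(1_G;1,1)u^{2^k+1}=((\Psi(u)(2)\Psi(u)(1))^{2^{k-1}}\Psi(u)(2);1,2)$ yields (2) and (5); and a separate argument gives $G=H$). None of this is replaceable by your closing remarks, so as a proof of the stated lemma the proposal has a genuine gap.

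For the fragment you do prove, the argument is correct (I checked the computation: $\lambda_2=((ba)^{4c+1};1,1)$, $\rho_2=((ab)^{4c+1};2,2)$, and $c\equiv -3^{-1}$ modulo the $2$-power order of $ba$ makes $(\lambda_2,\rho_2)=(s_1,s_2)$ with $s_1\neq s_2$), and it is genuinely different from the paper's. The paper never touches group coordinates for this step: it exhibits the identities $\Gamma((g;i,i))=\lambda_2(\Gamma((g;i,i)),\Gamma(u(g;i,i)u),\Gamma(u),\Gamma(u^2))$ (and the analogue for $i\neq j$), so non-nilpotency is read off from the $\Gamma$-images alone and property (5) is not needed for this part. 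Your version buys a sharper byproduct --- that $G=\langle ba\rangle$ is cyclic when $S$ is minimal non-nilpotent, which is consistent with (4) and with the paper's example of a non-minimal semigroup of type $U_3$ where $\langle ba\rangle$ is proper --- but at the cost of a circular dependence on (5) unless you first prove (1)--(5) independently.
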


\begin{proof} Let $1_G$ denote the identity of $G$. Obviously  $(m,l) \subseteq \Gamma(u)$ implies that  $ (m)(l) \subseteq \Gamma(u^2)$. Then
because of (\ref{elements-of-ideal}),  it is easily seen that
\begin{eqnarray}
\Gamma((1_G;m,l))&=&\lambda_2(\Gamma((1_G;m,l)),\Gamma((1_G;l,m)),\Gamma(u^2),\Gamma(u)), \label{not-nilp1}\\
\Gamma((1_G;l,m))&=&\rho_2(\Gamma((1_G;m,l)),\Gamma((1_G;l,m)),\Gamma(u^2),\Gamma(u)).
\label{not-nilp2}
\end{eqnarray}
Hence the semigroup $\langle u, (g;m,l),(g;l,m) \mid
g\in G\rangle$ is not nilpotent by Lemma~\ref{finite-nilpotent}.
Since $S$ is \C, this implies that $$S=\langle u,
(g;m,l),(g;l,m) \mid g\in G\rangle.$$

Let $g\in G$. Since $u(1_G;m,l)=(x;l,l)$ for some $x\in G$,  we
obtain that   $(g;m,l)u(1_G;m,l)=(gx;m,l) \neq \theta$. Hence,
\begin{eqnarray} \label{left-right}
(g;m,l)u&=&(gx;m,m).
\end{eqnarray}
%
%
In particular, $(g;m,l)u,\, u(g;m,l) \in I=\langle
(g;m,l),\, (g;l,m) \mid g\in G\rangle$. 
Note that $I=\mathcal{M}^0(G,2,2;I_{2})$. Hence $I$ is an ideal in
the semigroup $T=\langle u,I\rangle$. Because of
(\ref{not-nilp1}) and (\ref{not-nilp2}) the semigroup $T$ is not
nilpotent. Furthermore, for any $u'\in \langle u\rangle$
one easily sees that $\Gamma (u')$ has at least two fixed points or
contains a transposition in its disjoint cycle decomposition.
Hence, because of (\ref{elements-of-ideal}), $\Gamma (u') \not\in
\Gamma (M)$. Therefore, $\langle u\rangle \cap M
=\emptyset$.

Consequently, we obtain that $S=\langle u\rangle \cup
M$, a disjoint union, $n=2$ and $\Gamma (u)=(m,l)$. 
It is then clear that in (\ref{not-nilp1})
and (\ref{not-nilp2}) one may replace $u$ by $u^{k_1}$, 
with $k_1$ an odd positive integer.
It follows that the
subsemigroup $\langle u^{k_1},M\rangle$ is not nilpotent. Since $S$
is \C\ this implies that $S=\langle u,M\rangle =\langle
u^{k_1},M\rangle$. So $u=u^{r}$ for some positive integer
$r\geq 3$. Let $r$ be the smallest such positive integer. Then
$u^{r-1}$ is an idempotent and $\langle u\rangle$ is a
cyclic group of even order. As $\langle u\rangle =\langle
u^{k_1}\rangle$ for any odd positive integer $k_1$, we get
that $\langle u\rangle$ has order $2^{k}$ for some positive
integer $k$. 

Without loss of generality we may assume that $m=1$, $l=2$. As $(1_G;1,1)u^2$ $=(\Psi(u)(2)\; \Psi(u)(1);1,1)$ we have
$$(1_G;1,1)u^{2^k+1}=((\Psi(u)(2)\; \Psi(u)(1))^{2^{k-1}}\; \Psi(u)(2);1,2).$$
Since $\langle u\rangle$ has order $2^{k}$, $u^{2^k+1}=u$ and thus
$(\Psi(u)(2)\Psi(u)(1))^{2^{k-1}}= 1_G$ and
$(1_G;1,1)u^{2^k}=(1_G;1,1)$. It follows then easily that
$(x;1,1)u^{2^k}=(x;1,1)$ for any $x\in G$. Similarly one obtains
that $u^{2^k}(x;1,1)=(x;1,1)$,
$u^{2^k}(x;2,2)=(x;2,2)u^{2^k}=(x;2,2)$ for any $x\in G$. Hence,
$u^{2^{k}}$ is the identity of the semigroup $S$.

Let $H=\langle \Psi(u)(1), \Psi(u)(2) \rangle$.
From (\ref{not-nilp1}) and (\ref{not-nilp2}) it easily follows
that the subsemigroup $$\mathcal{M}^0(H,2,2;I_{2})\cup \langle u\rangle$$ is not nilpotent.
Then, we obtain that $$\mathcal{M}^0( H, 2,2;I_2)\cup \langle
u\rangle= \mathcal{M}^0( G, 2,2;I_2)\cup \langle
u\rangle.$$
We now show that $G=H$. Suppose the contrary,
then there exists  $g \in G\backslash  H$. Let $\alpha=(g;1,1)$. Clearly, $\alpha \not \in \mathcal{M}^0(H, 2,2;I_2)$ and thus $\alpha \in \langle
u\rangle$. Since $\Gamma(u^{2k})=(1)(2),$ we get $\Gamma(u^{2k+1})=(1,2)$ for $k>1$ and $\Gamma((g;1,1))=(1,1,\theta)$, a contradiction. 
Thus $G=H$ and 
$S$ is a semigroup of type $U_{3}$.

Now suppose that $(g;i,j) \in \mathcal{M}^0(G,2,2;I_{2})$. If $i=j$ then
\begin{eqnarray}
\Gamma((g;i,i))&=&\lambda_2(\Gamma((g;i,i)),\Gamma(u(g;i,i)u),\Gamma(u),\Gamma(u^2)), \\
\Gamma(u(g;i,i)u)&=&\rho_2(\Gamma((g;i,i)),\Gamma(u(g;i,i)u),\Gamma(u),\Gamma(u^2)).
\end{eqnarray}
Hence the semigroup $\langle (g;i,i), u(g;i,i)u\rangle$ is not nilpotent
by Lemma~\ref{finite-nilpotent}. Since $S$ is \C\ this implies that $S=\langle (g;i,j), u\rangle$.

Otherwise if $i \neq j$ we have
\begin{eqnarray}
\Gamma((g;i,j)u)&=&\lambda_2(\Gamma((g;i,j)u),\Gamma(u(g;i,j)),\Gamma(u),\Gamma(u^2)), \\
\Gamma(u(g;i,j))&=&\rho_2(\Gamma((g;i,j)u),\Gamma(u(g;i,j)),\Gamma(u),\Gamma(u^2)).
\end{eqnarray}
Hence the semigroup $\langle (g;i,j)u, u(g;i,j)\rangle$ is not nilpotent
by Lemma~\ref{finite-nilpotent}. Again since $S$ is \C\ this implies that $S=\langle (g;i,j), u\rangle$.

\end{proof}

Note that not every semigroup of type  $U_{3}$ is  \C. Indeed, let   $S=\mathcal{M}^0(G,2,2;I_{2}) \cup \langle u \rangle$, with   $\langle u\rangle$ a cyclic group of order $2$, $u^{2}=1$ is the identity of $S$,
$\Gamma(u)=(1,2)$, $\Gamma(1)=(1)(2)$, $\Psi(u)(1)=g$, $\Psi(u)(2)=g$ and $G$ a cyclic group $\{1,g\}$. The subsemigroup $$\{(1_G;1,1), (1_G;2,2),(g;1,2),(g;2,1), u,1,\theta\}$$ is isomorphic with  $\mathcal{M}^0(\{e\},2,2;I_{2}) \cup \langle u \rangle$. As this is a proper semigroup and it is not nilpotent, the semigroup   $S$ is of type $U_{3}$ but it is not \C.

Semigroups of type $U_{3}$ show up as obstructions in the  description,  given in \cite{Riley},  of the  structure of  linear semigroups satisfying  certain global and local nilpotence conditions. In particular, it is described when finite semigroups are positively Engel. 
Recall that a semigroup $S$ is said to be \textit{positively Engel}, denoted (PE), if for some positive integer $n\geq 2$, $\lambda_{n}(a,b,1,1,c,c^{2},\ldots, c^{n-2}) =\rho_{n} (a,b,1,1,c,c^{2},\ldots, c^{n-2})$ for all $a, b$ in
$S$ and $c\in S^{1}$.
From Corollary 8 in \cite{Riley}  it follows that one of the  obstructions for a finite semigroup $S$ to be (PE) is that  $S$ has an epimorphic image that has the semigroup $\mathcal{F}_{7}$ of type $U_{3}$ as a subsemigroup, where
$\mathcal{F}_{7}=\mathcal{M}^0(\{e\},2,2;I_{2}) \cup \langle u \mid u^{2}=1  \rangle$.

In order to deal with the second and third cases listed in
Lemma~\ref{nilpotency} we  prove the following lemma.

\begin{lem}\label{nilpotency-lemma}
Let $S= \mathcal{M}^0(G,3,3;I_{3}) \cup \langle w_1, w_2\rangle$
be a semigroup that is the union of the ideal  $
M=\mathcal{M}^0(G,3,3;I_{3})$ and the subsemigroup $T=\langle
w_1,w_2\rangle$. Suppose $\Gamma(w_1)=(2,1,3,\theta)$ and $\Gamma
(w_2)= (2,3,\theta ) (1)$.
 Assume $G$ is a nilpotent group, $\theta$ is the zero element of both $M$ and $S$, and suppose
$w_2w_{1}^{2}=w_{1}^{2}w_2=w_{1}^{3}=w_2w_{1}w_2= \theta$.
Then the  following properties hold.
\begin{enumerate}
\item  $S$ is not nilpotent.
\item $T$ is nilpotent.
\item If a subsemigroup $S'$ of $S$ is not nilpotent, then $\langle w_1, w_2\rangle \subseteq
S'$.
\item Every proper Rees factor semigroup of $S$ is nilpotent.
\end{enumerate}
\end{lem}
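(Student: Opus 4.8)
The plan is to handle the four assertions in turn, the two main tools being the action $\Gamma$ together with the induced map $\Psi$, and a normal-form description of the nonzero elements of $T=\langle w_1,w_2\rangle$.

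For (1) I would not search for a Malcev failure inside $S$ but inside the homomorphic image $\Gamma(S)$; since nilpotency passes to quotients, it suffices to show $\Gamma(S)$ is not nilpotent. Writing transformations on $\{1,2,3\}\cup\{\theta\}$, put $X=\Gamma((g;1,1))=(1)$ and $Y=\Gamma((g';2,3))=(3,2,\theta)$ (using (\ref{elements-of-ideal})), together with $A=\Gamma(w_1)=(2,1,3,\theta)$ and $B=\Gamma(w_2)=(2,3,\theta)(1)$. A direct composition of these four transformations gives $\lambda_2(X,Y,A,B)=X\neq Y=\rho_2(X,Y,A,B)$, so by Lemma~\ref{finite-nilpotent} the image $\Gamma(S)$, and hence $S$, is not nilpotent.

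For (2) the key preliminary is a normal form for $T$. Using $w_1^3=w_2w_1^2=w_1^2w_2=w_2w_1w_2=\theta$ one checks that in a nonzero word the occurrences of $w_2$ form a single block and $w_1$ occurs at most twice; concretely the nonzero elements are $w_1$, $w_1^2$, the powers $w_2^j$, and the words $w_1^{\varepsilon}w_2^jw_1^{\delta}$ with $\varepsilon,\delta\in\{0,1\}$. I then let $J$ be the set of those nonzero elements of $T$ in which $w_1$ occurs, together with $\theta$. This is an ideal of $T$, and since a nonzero word contains $w_1$ at most twice, any product of three elements of $J$ contains $w_1$ at least three times and so equals $\theta$; thus $J^3=\{\theta\}$. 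The Rees quotient $T/J$ is generated by the image of $w_2$ together with $\theta$, hence is commutative and therefore nilpotent of class $1$. By the remark following Lemma~\ref{finite-nilpotent} (an ideal $J$ with $J^n=\{\theta\}$ and nilpotent quotient forces nilpotency), $T$ is nilpotent.

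For (3), let $S'\leq S$ be non-nilpotent. Since $M$ is nilpotent and $S/M\cong T/(T\cap M)$ is a quotient of the nilpotent $T$, hence nilpotent, the reduction used in Lemma~\ref{nilpotency} lets me pass to a Malcev failure $x=\lambda_m$, $y=\rho_m$ with $x\neq y$ in $S'\cap M$ and letters $v_1,\dots,v_m\in(S')^1$; iterating the failure I may take $m$ as large as I wish. Writing $x=(g_x;p,C_0)$, $y=(g_y;r,D_0)$, an induction shows every $\lambda_j=(\,\cdot\,;p,C_j)$ and $\rho_j=(\,\cdot\,;r,D_j)$ is nonzero, the rows $p,r$ stay fixed, the columns obey $C_{j+1}=D_j$ and $D_{j+1}=C_j$, and nonvanishing at step $j+1$ forces $\Gamma(v_{j+1})(r)=C_j$ and $\Gamma(v_{j+1})(p)=D_j$. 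If $p=r$ these give $C_j=D_j$ for all $j$ and the failure projects to a failure in $G$, contradicting nilpotency of $G$; hence $p\neq r$. Then each $\Gamma(v_{j+1})$ is nonzero at the two distinct points $p,r$, and by the classification in (2) together with (\ref{elements-of-ideal}) the only elements of $S$ with $\Gamma$ nonzero at more than one point are $w_1$, $w_2$ and the adjoined identity; as $w_1,w_2$ are nonzero only on $\{1,2\}$, this forces $\{p,r\}=\{1,2\}$. The final, and hardest, step is a finite-state analysis of the column pair $(C_j,D_j)$: the state determines the admissible letter (for $(p,r)=(1,2)$ the letters $w_1,w_2,1$ correspond to the distinct pairs $(1,3),(3,1),(2,1)$, symmetrically otherwise) and the state swaps at each step, so the only closed walk of positive length alternates $w_1$ and $w_2$ on the $2$-cycle $(1,3)\leftrightarrow(3,1)$, while any identity-step leads to a dead state. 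Hence both $w_1$ and $w_2$ occur among the $v_j$, giving $w_1,w_2\in S'$ and $\langle w_1,w_2\rangle\subseteq S'$. This state bookkeeping is the main obstacle.

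For (4), observe that $M$ is $0$-minimal in $S$: an ideal of $S$ contained in $M$ is an ideal of the $0$-simple semigroup $M$, hence $\{\theta\}$ or $M$. Let $K$ be an ideal with $\abs{K}\geq 2$, so $K\cap M\in\{\{\theta\},M\}$. If $K\cap M=\{\theta\}$, choose a nonzero $k\in K$; then $k\in S\setminus M\subseteq T$ and $kM\subseteq K\cap M=\{\theta\}$, forcing $\Gamma(k)=\theta$, which is impossible since every nonzero element of $T$ has nonzero $\Gamma$. Therefore $M\subseteq K$, and $S/K$ is a quotient of $S/M$; as $S/M$ is nilpotent, so is $S/K$. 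This completes the plan.
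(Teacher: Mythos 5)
Your proof is correct and follows essentially the same route as the paper's: the same $\Gamma$-computation on the pair $(1_G;1,1)$, $(1_G;2,3)$ for (1), the same ideal $T\setminus\langle w_2\rangle$ with cube $\{\theta\}$ and commutative quotient for (2), the same reduction to a Malcev failure with $x,y\in M$ followed by an analysis of which elements of $T$ can occur as letters for (3), and the nilpotency of $S/M$ for (4). Your finite-state packaging of (3) and your explicit $0$-minimality argument in (4) merely spell out details that the paper handles by a case analysis over its partition of $T$ and by a one-line remark, respectively.
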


\begin{proof}
(1) As
$$\Gamma((1_G;1,1))=\lambda_2(\Gamma((1_G;1,1)),\Gamma((1_G;2,3)),\Gamma(w_1),\Gamma(w_2))$$
and
$$
\Gamma((1_G;2,3))=\rho_2(\Gamma((1_G;1,1)),\Gamma((1_G;2,3)),\Gamma(w_1),\Gamma(w_2))$$
we get from Lemma~\ref{finite-nilpotent} that $S$ is not nilpotent.

(2) Clearly $I=T\backslash \langle w_{2}\rangle$ is an ideal of $T$ and $I^{3}=\{ \theta\}$. Obviously $T/I$ is commutative and thus nilpotent.
Hence, $T$ is nilpotent.

(3) Assume $S'$ is a subsemigroup of $S$ that is not nilpotent.
Again by Lemma~\ref{finite-nilpotent}, there exists a positive
integer $p$, distinct elements $t, t'\in S'$ and $t_{1}, t_{2},
\ldots, t_{p}\in S'^1$ such that $t = \lambda_{p}(t, t', t_{1},
t_{2}, \ldots, t_{p})$, $t' = \rho_{p}(t, t', t_{1}, t_{2},
\ldots, t_{p})$. Since $T$ is nilpotent,
$$\{t, t', t_{1}, t_{2}, \ldots, t_{p}\} \cap \mathcal{M}^0(G,3,3;I_{3}) \neq
\emptyset $$ and since $\mathcal{M}^0(G,3,3;I_{3})$ is an ideal of
$S$, $t$ and $t'$ are in $\mathcal{M}^0(G,3,3;I_{3})$. Since $S'$
is not nilpotent and $\mathcal{M}^0(G,3,3;I_{3})$ is nilpotent, we
obtain that at least one of the elements $t_1, \ldots, t_{p}$ is
in $T$. Now, if necessary, replacing $t$ by $\lambda_{i-1}(t, t',
t_{1}, t_{2}, \ldots, t_{i-1})$ and $t'$ by $\rho_{i-1}(t, t',
t_{1}, t_{2}, \ldots, t_{i-1})$, we may assume that $t_1\in T$.

Write $t=(g_1 ;n_1,n_2)$ and $t'=(g_2 ;n_3,n_4)$,  for some $1\leq
n_1,n_2,n_3,n_4 \leq 3$ and $g_1 , g_2  \in G$.

 Consider the following subsets of $T$: $A=\{w_1\}$,  $B=\{w_{2}\}$,
$C=\{w_{2}^{n} \mid n \in \mathbb{N}, n > 1\}$, $D=\{w_1 w_{2}^{n}
w_1 \mid n \in \mathbb{N}\}$,  $E=\{w_1 w_{2}^n \mid n \in
\mathbb{N}, n > 0\}$, $F=\{w_{2}^{n} w_1 \mid n \in \mathbb{N}, n
> 0\}$ and $Z=\{T^1w_{1}^{2}T^1, T^1w_{2} x_{1} w_{2} T^1 \}
\backslash \{w_{1}^{2}\}.$
By determining the images of these sets under the mapping $\Gamma$ one
sees that these sets form a partition of $T$. Since
$w_{2}w_{1}^{2}=w_{1}^{2}w_{2}=w_{1}^{3}=w_{2}w_{1}w_{2}= \theta$
we have that $Z=\{\theta\}$. Hence $t_1 \not \in Z$.

If $t_1 \in C$
then $n_1=n_2=n_3=n_4=1$ (because, for every $a \in C$ we have $
\Gamma(a)=(1)$) and thus $g_{1} = \lambda_{p}(g_{1},g_{2}, x_{1},
\ldots, x_{p})$ and $g_{2}=\rho_{p}(g_{1},g_{2},x_{1}, \ldots,
x_{p})$ for some $x_{1},\ldots, x_{p}\in G$, in contradiction
with $G$ being nilpotent. If $t_1 \in D$ then $n_1=n_3=2$ and
$n_2=n_4=3$ (because for every $a \in D$ we have
$\Gamma(a)=(2,3,\theta)$); this again yields a contradiction with
$G$ being nilpotent. Similarly $t_1 \not \in E, F$. Now suppose
that $t_1\in A$, i.e. $t_{1}=w_1$. Since
$\Gamma(w_1)=(2,1,3,\theta)$, $t=\lambda_{p}(t,t',t_{1},t_{2},\ldots, t_{p})\neq \theta$ and $t'=\rho_{p}(t,t',t_{1},\ldots, t_{p})\neq \theta$ we get that
$\{n_1,n_3\} \subseteq \{1,2\}$.
As $tw_1t'\neq \theta$ and $t'w_1t\neq \theta$ we obtain that $n_2=\Gamma (w_{1})(n_3)$ and $n_4=\Gamma (w_{1})(n_1)$.
Hence,
if
$n_1=n_3$, then $n_2=n_4$,   again yielding a contradiction with $G$ being
nilpotent. So, $n_1\neq n_3$.
If $n_1=1$ then $n_3=2$, $n_4=3$, $n_2=1$ and thus then $\{(n_1,n_2),(n_3,n_4)\}=\{(1,1),(2,3)\}$. Similarly, we also get the latter if  $n_1=2$. Note that in this case $p>1$. It then can be easily verified that $t_2=w_2$
and thus $T \subseteq S'$, as desired. Similarly if $t_1=w_2$,
then $T \subseteq S'$.

(4) Since $M$ is a completely $0$-simple semigroup and because $T$ is
nilpotent, it is clear that every proper Rees factor of $S$ is
nilpotent. 
\end{proof}

We now are in a position to obtain a description of finite \C\ semigroups that are not of type $U_3$
and that have a proper ideal that is a completely $0$-simple inverse semigroup.

\begin{lem}\label{nilpotency-case2}
Let $S$ be a finite \C\ semigroup with a proper ideal $M =
\mathcal{M}^0(G,n,n;I_{n})$, $G$ a nilpotent group and $n\geq 2$.
Suppose $S$ is not of type $U_{3}$, i.e. for $x \in S\backslash M$ there do not exist distinct numbers $l_1$ and $l_2$ between $1$ and $n$ such that $(l_1,l_2)\subseteq \Gamma(x)$.
Then $S$ is a semigroup of one of the following two types.
\begin{enumerate}
\item $S= \mathcal{M}^0(G,3,3;I_{3}) \cup \langle x_1,
x_2\rangle$, with 
$ \mathcal{M}^0(G,3,3;I_{3})$ a proper ideal of $S$,
$\Gamma(x_1)=(2,1,3,\theta)$, $\Gamma(x_2)=(2,3,\theta)(1)$,
$x_{2}x_{1}^{2}=x_{1}^{2}x_{2}=x_{1}^{3}=x_{2}x_{1}x_{2}= \theta$
(the zero element of $S$), $$G=\langle \Psi(x_1)(1), \,
\Psi(x_1)(2),\,  \Psi(x_2)(1),\,  \Psi(x_2)(2) \rangle .$$
Such a semigroup is said to be of  type $U_{4}$.




\item $S= \mathcal{M}^0(G,n,n;I_{n}) \cup \langle v_1,
v_2\rangle$, with
$ \mathcal{M}^0(G,n,n;I_{n})$ a proper ideal of $S$,
$$[\ldots, k_1, k_2, \ldots, k_3,k_4, \ldots]\sqsubseteq \Gamma(v_1), [\ldots, k_1, k_4, \ldots, k_3, k_2, \ldots] \sqsubseteq \Gamma(v_2)$$
for pairwise distinct numbers $k_1, k_2,k_3$ and $k_4$ between $1$
and $n$, $G=\langle \Psi(v_1)(1),  \ldots, \Psi(v_1)(n),\, \Psi(v_2)(1),  \ldots, \Psi(v_2)(n), \theta
\rangle \backslash \{\theta\}$
and there do not exist pairwise distinct numbers $o_1, o_2$
and $o_3$ between $1$ and $n$ such that $(o_2,o_1,o_3,\theta)\subseteq \Gamma(y)$, $(o_2,o_3,\theta)(o_1)\subseteq \Gamma(z) $
 for some $y,z\in \langle v_1, v_2 \rangle$.
Such a semigroup is said to be of  type $U_{5}$.
\end{enumerate}
Furthermore, if $S$ of type $U_{4}$ then 
 $S=\langle (g;1,1), (g';2,3), x_1,\, x_2 \rangle$ 
and if $S$ is of type $U_{5}$ then 
 $S=\langle (g;k_1, k_4), (g';k_3,
k_2), v_1, v_2 \rangle $, 
for every $g, g' \in G$.
\end{lem}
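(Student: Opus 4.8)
The plan is to run the trichotomy of Lemma~\ref{nilpotency} against the hypothesis that $S$ is not of type $U_3$, and then to treat the two surviving cases by the machinery already used in Lemma~\ref{nilpotency-case1}: out of a few elements of $M$ together with suitable elements of $S\setminus M$ one manufactures an explicit non-nilpotent subsemigroup, and minimality forces $S$ to coincide with it. First I apply Lemma~\ref{nilpotency} to obtain $w_1,w_2\in S\setminus M$ realizing one of the patterns (i)--(iii). Pattern (i) contains a transposition $(m,l)\subseteq\Gamma(w_1)$, so by Lemma~\ref{nilpotency-case1} the semigroup $S$ would be of type $U_3$, which is excluded; hence only (ii) and (iii) survive. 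I claim (ii) produces type $U_4$ and (iii) produces type $U_5$.

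For case (ii), relabel so that $(m,l,m')=(2,1,3)$. The non-$U_3$ hypothesis pins down the cycle shapes exactly: since $\Gamma(w_2)$ fixes $1$ and sends $2\mapsto 3$, the arrow $3\mapsto 2$ is forbidden (it would create the transposition $(2,3)$), so $\Gamma(w_2)=(2,3,\theta)(1)$; and a short computation shows that $\Gamma(w_1w_2w_1)$ contains the transposition $(2,3)$ unless $\Gamma(w_1)$ itself terminates at $\theta$, forcing $\Gamma(w_1)=(2,1,3,\theta)$. Put $x_1=w_1$ and $x_2=w_2$. Exactly as in the identities displayed in the proof of Lemma~\ref{nilpotency-case1}, the cycle data yield a $\lambda_2=\rho_2$ obstruction showing that $\mathcal{M}^0(G,3,3;I_{3})\cup\langle x_1,x_2\rangle$ (on the index set $\{1,2,3\}$) is not nilpotent; minimality then forces $S$ to equal it, so $n=3$ and $M=\mathcal{M}^0(G,3,3;I_{3})$. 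Finally the equality $G=\langle\Psi(x_1)(1),\Psi(x_1)(2),\Psi(x_2)(1),\Psi(x_2)(2)\rangle$ is obtained by the same $H=G$ device as in Lemma~\ref{nilpotency-case1}: writing $H$ for the subgroup on the right, the semigroup $\mathcal{M}^0(H,3,3;I_{3})\cup\langle x_1,x_2\rangle$ is non-nilpotent, hence equals $S$, forcing $H=G$.

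Case (iii) is easier, since the defining conditions of $U_5$ are only containments. The witnesses $w_1,w_2$ already satisfy the required $\sqsubseteq$-relations, so I set $v_1=w_1$ and $v_2=w_2$; the same obstruction-plus-minimality argument gives $S=M\cup\langle v_1,v_2\rangle$ and that $G$ is generated by the displayed $\Psi$-values, with $M$ retaining all $n\ge 4$ of its indices (the four numbers $k_1,\dots,k_4$ being pairwise distinct). The remaining clause---that no $y,z\in\langle v_1,v_2\rangle$ exhibit the $U_4$ pattern $(o_2,o_1,o_3,\theta)\subseteq\Gamma(y)$, $(o_2,o_3,\theta)(o_1)\subseteq\Gamma(z)$---holds automatically: such $y,z$ would lie in $S\setminus M$ and play exactly the role of $x_1,x_2$ in case (ii), so minimality would force $M$ to have only three indices, contradicting $n\ge 4$. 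In both cases the ``Furthermore'' generation statements follow from the closing argument of Lemma~\ref{nilpotency-case1}: for arbitrary $g,g'$ one checks the appropriate $\lambda_2=\rho_2$ identity for $\Gamma$ to see that the four listed elements already generate a non-nilpotent subsemigroup, which by minimality must be all of $S$.

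I expect the main obstacle to be verifying the $U_4$ defining relations $x_1^3=x_2x_1^2=x_1^2x_2=x_2x_1x_2=\theta$ as genuine equalities in $S$, rather than merely as equalities of $\Gamma$-images. Each of these four products has the constant transformation $\theta$ as its $\Gamma$-image, so it belongs to the set $A=\{s\in S:\,sM=\{\theta\}\}$ of left annihilators of $M$. One checks that $A$ is an ideal of $S$ with $A\cap M=\{\theta\}$ and $AM=MA=\{\theta\}$, so that $A\subseteq\{\theta\}\cup(S\setminus M)$. The heart of the matter, and the step I expect to require the most care, is to show $A=\{\theta\}$: one argues that $A$ contains no nonzero idempotent---a nonzero idempotent in $A$ would, once $n=3$ is known, have to act nontrivially on some $\mathcal{R}$-class and hence escape $A$---so that the finite semigroup $A$ is power-nilpotent; then $A^{k}=\{\theta\}$ together with the nilpotency of the proper Rees factor $S/A$ would make $S$ nilpotent unless $A=\{\theta\}$. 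With $A$ trivial, every $\Gamma$-trivial product equals $\theta$, which is precisely the required set of relations, completing the identification of $S$ with a semigroup of type $U_4$.
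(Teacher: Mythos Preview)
Your bifurcation ``case (ii) produces $U_4$ and case (iii) produces $U_5$'' is where the argument breaks. In case (ii) you assert that the non-$U_3$ hypothesis forces the exact shapes $\Gamma(w_2)=(2,3,\theta)(1)$ and $\Gamma(w_1)=(2,1,3,\theta)$, but it does not. Forbidding transpositions only rules out $\Gamma(w_2)(3)=2$; it does nothing to exclude $\Gamma(w_2)(3)\in\{4,\ldots,n\}$ when $n>3$. Your ``short computation'' for $\Gamma(w_1w_2w_1)$ is likewise unsound: with $n=4$, $\Gamma(w_1)=(2,1,3,4,\theta)$ and $\Gamma(w_2)=(1)(2,3,4,\theta)$ one computes $\Gamma(w_1w_2w_1)=(2,3,\theta)$, which contains no transposition, so the claimed contradiction does not materialize and $\Gamma(w_1)(3)=\theta$ is not forced. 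Without the exact terminated cycles you cannot conclude that $\mathcal{M}^0(G,\{1,2,3\},\{1,2,3\};I_3)\cup\langle w_1,w_2\rangle$ is a subsemigroup, and the minimality step that collapses $n$ to $3$ never gets off the ground.

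The paper organizes the dichotomy differently, and this is the missing idea: it does \emph{not} tie $U_4$ to case (ii) and $U_5$ to case (iii). Instead it first asks whether the exact $U_4$ pattern $(o_2,o_1,o_3,\theta)\subseteq\Gamma(y)$, $(o_2,o_3,\theta)(o_1)\subseteq\Gamma(z)$ occurs for \emph{some} $y,z\in S$. If it does, your Part-1 style argument gives $U_4$. If it does not, then even the case-(ii) witnesses $w_1,w_2$ must be massaged into case-(iii) witnesses: when $\Gamma(w_1)(m')=r\neq\theta$ one takes $v_1=w_1^{2}$ and $v_2=w_1w_2$ and checks the four-index $\sqsubseteq$-pattern for them, landing in $U_5$; the residual subcase $\Gamma(w_1)(m')=\theta$ is then eliminated by a further chain of computations that again produces either a transposition or the forbidden $U_4$ pattern. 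So case (ii) genuinely feeds into $U_5$, and your proof must accommodate this.

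Your handling of the defining relations via the annihilator ideal $A=\Gamma^{-1}(\theta)$ is fine in spirit and close to what the paper does (it uses an explicit ideal $S'$ built from the offending words and shows $S/S'$ is non-nilpotent via the same $\lambda_2,\rho_2$ identities, whence $S'=\{\theta\}$). The cleaner justification that $|A|=1$ is simply that the non-nilpotency witnesses $s_1,s_2$ lie in $M\setminus\{\theta\}\subseteq S\setminus A$, so they survive in $S/A$ and exhibit its non-nilpotency; minimality then forces $|A|\le 1$. You do not need the detour through idempotents.
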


\begin{proof}
%
For clarity we give a brief outline of the structure of the proof. By assumption $S$ is not of type $U_{3}$ and hence it follows from Lemma~\ref{nilpotency} that we have two cases to deal with and this is done in three  parts. In part (1)  we deal with a special case of part (ii) of Lemma~\ref{nilpotency}.  In the remainder of the proof we then assume that
we are not in this special case.  In part (2) we  deal with all cases occurring in part (iii) of Lemma~\ref{nilpotency} as well as in part (ii) of Lemma~\ref{nilpotency}, the latter  provided some extra condition is satisfied. Finally in part (3), we show that if this extra assumption is not satisfied  then $S$ has to be of   type $U_5$.

Part (1).
 We begin the proof with handling a special case stated in  part (ii) of Lemma~\ref{nilpotency}.  Suppose that there exist 
elements $x_1, x_2\in S$ such that 
     $$(o_2,o_1,o_3,\theta) \subseteq \Gamma(x_{1}) \mbox{ and } (o_2,o_3,\theta)(o_1)\subseteq \Gamma(x_{2}),$$
with $o_1,\, o_2,\, o_3$ positive integers between $1$ and $n$. 
It is then readily verified that $\mathcal{M}^0(G,\{o_1,o_2,o_3\},\{o_1,o_2,o_3\};I_{\{o_1,o_2,o_3\}}$ $)$ is
an ideal in the subsemigroup
$\mathcal{M}^0(G,\{o_1,o_2,o_3\},\{o_1,o_2,o_3\};I_{\{o_1,o_2,o_3\}}$ $) \cup
\langle x_1, x_2 \rangle$ of $S$. Because
\begin{eqnarray}
    ~\Gamma((1_G;o_1,o_1))&=&
         \lambda_2(\Gamma((1_G;o_1,o_1)),\Gamma((1_G;o_2,o_3)),\Gamma(x_1),\Gamma(x_2))          
          \label{eq3u3} , \\
   ~\Gamma((1_G;o_2,o_3))&=&
         \rho_2(\Gamma((1_G;o_1,o_1)),\Gamma((1_G;o_2,o_3)),\Gamma(x_1),\Gamma(x_2)) 
         \label{eq4u3}
\end{eqnarray}
we get that  the semigroup
$\mathcal{M}^0(G,\{o_1,o_2,o_3\},\{o_1,o_2,o_3\};I_{\{o_1,o_2,o_3\}}$ $) \cup
\langle x_1, x_2 \rangle$ is not nilpotent and
thus, as $S$ is \C, $S = \mathcal{M}^0(G,3,3;I_3) \cup \langle
 x_1, x_2 \rangle$.

We now prove that $S$ is a semigroup of type $U_4$. We do so by showing that 
 $x_{1}$ and $x_{2}$ satisfy conditions listed in part (1) of the statement of the lemma.
Let $T=\langle x_{1},x_{2}\rangle$.
Consider the
following subsets of $T$:  $A= \{x_1\}$, $B=\{x_{2}\}$, $C=\{x_{2}^{n}
\mid n \in \mathbb{N}, n > 1\}$, $D=\{x_1 x_{2}^{n} x_1 \mid n \in
\mathbb{N}\}$, $E=\{x_1 x_{2}^n \mid n \in \mathbb{N}, n > 0\}$,
$F=\{x_{2}^{n} x_1 \mid n \in \mathbb{N}, n > 0\}$ and
$Z=\{T^1x_{1}^{2}T^1, T^1x_{2} x_{1} x_{2} T^1 \} \backslash
\{x_{1}^{2}\}$. By determining the images of these sets under the
mapping $\Gamma$ one sees that these sets form a partition of $T$.
Since $S'=\{T^1x_{1}^2T^1, T^1x_{2} x_{1}x_{2} T^1 \} \backslash
\{x_{1}^2\} \cup \{\theta\}$ is an ideal of $S$, it easily follows
from (\ref{eq3u3}) and (\ref{eq4u3}) that $S / S'$ is
non-nilpotent. Hence, $S'=\{\theta\}$ and thus  $Z=\{\theta\}$.
So,   $x_{1}^3= x_{1}^{2}x_{2}=x_{2}x_{1}^{2}=x_{2}x_{1}x_{2}=\theta$, as desired.
From Lemma~\ref{nilpotency-lemma}(2) we know that the
subsemigroup $T=\langle x_{1},x_{2}\rangle $ is nilpotent.

Let $H=\langle \Psi(x_1)(o_1), \Psi(x_1)(o_2),
\Psi(x_2)(o_1), \Psi(x_2)(o_2) \rangle$. 
For simplicity, and without loss of generality, we may assume that 
 $o_1=1$, $o_2=2$ and $o_3=3$. 
From (\ref{eq3u3}) and (\ref{eq4u3}) it can be easily verified that the
subsemigroup
$$\mathcal{M}^0(\langle \Psi(x_1)(1), \Psi(x_1)(2),
\Psi(x_2)(1), \Psi(x_2)(2) \rangle,3,3;I_{3})\cup \langle
x_{1},x_{2}\rangle$$ is not nilpotent.
Hence $$\mathcal{M}^0( H, 3,3;I_{3})\cup \langle
x_{1},x_{2}\rangle= \mathcal{M}^0( G, 3,3;I_{3})\cup \langle
x_{1},x_{2}\rangle.$$
We now show that $G=H$. Suppose the contrary and let
$g \in G\backslash  H$. 
Let $\alpha=(g;1,1)$. Clearly, $\alpha \not \in \mathcal{M}^0(H, 3,3;I_{3})$ and thus $\alpha \in \langle
x_{1},x_{2}\rangle$. Since $(g;1,1)(1_G;1,1) \neq \theta$, we get that  $\Gamma(\alpha)(1), \Psi(\alpha)(1) \neq \theta$ and $(g;1,1)=(g;1,1)(1_G;1,1)=\alpha(1_G;1,1)=(\Psi(\alpha)(1);\Gamma(\alpha)(1),1)$ and thus $g = \Psi(\alpha)(1)$. This contradicts with  $g \not \in H$.
 So, indeed, $G=H$.
%
%
Hence we have shown that indeed $S$ is a semigroup of type $U_{4}$.
To prove the last part of the statement of the lemma for this semigroup, let $g,g'\in G$.
Since 
$$\Gamma((g;1,1))=\lambda_2(\Gamma((g;1,1)),\Gamma((g';2,3)),\Gamma(x_1),\Gamma(x_2))$$
and
$$
\Gamma((g';2,3))=\rho_2(\Gamma((g;1,1)),\Gamma((g';2,3)),\Gamma(x_1),\Gamma(x_2)),$$
we get that the subsemigroup $\langle (g;1,1), (g';2,3), x_1, x_2
\rangle$ is not nilpotent. 
Since $S$ is \C\ it follows that  $S= \langle (g;1,1), (g';2, 3), x_1,$ $ x_2 \rangle$.

Part (2).
In the remainder of the proof we assume that there do not exist pairwise distinct numbers $o_1, o_2$ and $o_3$  between $1$ and $n$ such that $(o_2,o_1,o_3,\theta)\subseteq \Gamma(y)$, $(o_2,o_3,\theta)(o_1)\subseteq \Gamma(z) \mbox{ for some } y,z\in S$.
Because, by assumption $S$ is not of type $U_{3}$, it follows from Lemma~\ref{nilpotency} that $S\backslash  M$ contains elements $w_1$ and $w_2$ such that
  $$(\ldots, m,l,m', \ldots) \subseteq \Gamma(w_1),  (l)(\ldots, m, m',\ldots)\subseteq \Gamma(w_2),$$
or it contains elements $v_1$ and $v_2$ such that 
  $$ [\ldots, k,m, \ldots, l,k', \ldots]\sqsubseteq \Gamma(v_1), [\ldots, l, m, \ldots, k, k', \ldots] \sqsubseteq \Gamma(v_2),$$
for some pairwise distinct numbers $l, m, m',k$ and $k'$ between $1$
and $n$.
In the former case, without 
loss of generality, we may assume that
$m=1$, $l=3$ and $m'=2$.

Assume  the former case holds, i.e. 
    $$(\ldots, 1,3,2, \ldots ) \subseteq \Gamma (w_{1}) \mbox{ and }
    (3) \, (\ldots, 1,2, \ldots ) \subseteq \Gamma (w_{2}) ,$$ 
and also suppose that 
  $$\Gamma(w_1)(2)=r \neq \theta .$$
Then, $[\ldots, 1,2, \ldots, 3,r,
\ldots]\sqsubseteq \Gamma(w_{1}^2), [\ldots, 1, r, \ldots, 3, 2,
\ldots] \sqsubseteq \Gamma(w_1w_2)$.
Hence,
 $v_{1}=w_{1}^{2}$ and $v_{2}=w_{1}w_{2}$ are  elements as in the latter case.
For such elements $\Gamma(v_1)=[\ldots, 1,2, \ldots, 3,r,
\ldots]$ and $\Gamma(v_2)=[\ldots, 1, r, \ldots, 3, 2,$ $
\ldots]$ 
we get that

$$\Gamma((1_G;1,r))=\lambda_2(\Gamma ((1_G;1,r)),\Gamma((1_G;3,2)),\Gamma(v_1),\Gamma(v_2))$$
and
$$
\Gamma((1_G;3,2))=\rho_2(\Gamma((1_G;1,r)),\Gamma((1_G;3,2)),\Gamma(v_1),\Gamma(v_2)).$$
It follows  that
the subsemigroup $\mathcal{M}^0(G,n,$ $n;I_n) \cup \langle v_{1},
v_{2} \rangle$ is not nilpotent. 
So,  because $S$ is \C,  $S
= \mathcal{M}^0(G,n,n;I_n) \cup \langle  v_1, v_2 \rangle$.
Furthermore,
 it is easily verified that
$$\Gamma((g;1, r))=\lambda_2(\Gamma((g;1, r)),\Gamma((g';3, 2)),\Gamma(v_1),\Gamma(v_2))$$
and
$$
\Gamma((g';3,2))=\rho_2(\Gamma((g;1,
r)),\Gamma((g';3,2)),\Gamma(v_1),\Gamma(v_2)).$$ Hence, for any
$g,g'\in G$,  the subsemigroup $\langle (g;1, r),
(g';3,2), v_1, v_2\rangle$ is not nilpotent. Therefore,
$S=\langle (g;1, r), (g';3,2), v_1, v_2 \rangle$ for
every $g, g' \in G$.
We now show that such a semigroup $S$ is of type $U_5$.
Let $$H_{1}=\langle \Psi(v_1)(1),  \ldots,
\Psi(v_1)(n), \Psi(v_2)(1),  \ldots, \Psi(v_2)(n), {\theta} \rangle.$$
Note that $H=H_{1}\backslash \{ \theta \}$ is a subgroup  of  the maximal subgroup $G$ defining $M$.
Since  \begin{eqnarray*}
\Gamma((1_G;3,2))&=&\lambda_2(\Gamma((1_G;3,2)),\Gamma((1_G;1,r)),\Gamma(v_1),\Gamma(v_2)), \label{v1}\\
\Gamma((1_G;1,r))&=&\rho_2(\Gamma((1_G;3,2)),\Gamma((1_G;1,r)),\Gamma(v_1),\Gamma(v_2)) \label{v2}
\end{eqnarray*}
we get that
 $\mathcal{M}^0( H, n,n;I_{n})\cup \langle
v_{1},v_{2}\rangle$ is not nilpotent.
Because, by assumption,  $S$ is \C, we obtain that $$\mathcal{M}^0( H, n,n;I_{n})\cup \langle
v_{1},v_{2}\rangle= \mathcal{M}^0( G, n,n;I_{n})\cup \langle
v_{1},v_{2}\rangle.$$
We now show that $G=H$. Suppose the contrary and let 
$g \in G\backslash  H$. Let $\alpha=(g;1,1)$. Clearly, $\alpha \not \in \mathcal{M}^0(H, n,n;I_{n})$ and thus $\alpha \in \langle
v_{1},v_{2}\rangle$. Since $(g;1,1)(1_G;1,1) \neq \theta$ we get that  
$\Gamma(\alpha)(1) \neq \theta$, $\Psi(\alpha)(1) \neq \theta$ and 
 $$(g;1,1)=(g;1,1)(1_G;1,1)=\alpha(1_G;1,1)=(\Psi(\alpha)(1);\Gamma(\alpha)(1),1)$$
Thus $g = \Psi(\alpha)(1)$. This contradicts with  $g \not \in H$.
 So, indeed, $G=H$.

Part (3). We are left to deal with the case that
 $S\backslash  M$ contains elements $w_1$ and $w_2$ such that
  $$(\ldots, 1,3,2, \ldots) \subseteq \Gamma(w_1),  (3)(\ldots, 1, 2,\ldots)\subseteq \Gamma(w_2),$$
and  
 $$\Gamma(w_1)(2)=\theta .$$
We show that if $S$ is not of type $U_5$ then this can not occur.
First notice that  we also may assume that there
does not exist a positive integer $r'$, with $1\leq r' \leq n$, such that
$\Gamma (w_{1})(r')=1$. Indeed, suppose the contrary and let $r'$ be
such a positive integer.  Then $[\ldots, 1,2, \ldots, r',3,
\ldots]\sqsubseteq \Gamma(w_{1}^2), [\ldots, 1, 3, \ldots, r', 2,
\ldots] \sqsubseteq \Gamma(w_2w_1)$ and thus $S$ is a semigroup of
type $U_{5}$. This proves the claim.
Thus $$(1,3,2,\theta)
\subseteq \Gamma(w_1) .$$
Next we claim that the cycle  $(\ldots, 1, 2, \ldots)$ in
$\Gamma(w_2)$ ends in $\theta$. Indeed, assume the contrary. That is, this cycle ends in a positive integer.  Let $n_{2}$ denote the length of this cycle. Then, $(1,3) \subseteq \Gamma(w_2^{n_2-1}w_1), (1)(3)\subseteq
\Gamma(w_2^{n_2})$. However, this is excluded by assumption.
This proves the claim and thus there
exist positive integers
$k,k'$ and $n'$ such that $\Gamma(w_2)(k)=k'\neq \theta$,
$\Gamma(w_2)(k')= \theta$ and $\Gamma(w_2^{n'})(1)=k'$. So 
 $$(3)\, (
\ldots, 1,2, \ldots, k,k',\theta ) \subseteq \Gamma (w_{2}) .$$
If $\Gamma(w_2^{n'-1}w_1)(k')=k''\neq \theta$ then $[\ldots, 1,k',
\ldots, 3,k'', \ldots]\sqsubseteq \Gamma((w_2^{n'-1}w_1)^2)$,
$[\ldots, 1, k'', \ldots, 3, k', \ldots] \sqsubseteq
\Gamma(w_2^{n'-1}w_1w_2^{n'})$. Since $\Gamma(w_2)(3)=3,
\Gamma(w_2)(1)=2$ and $\Gamma(w_2)(k')=\theta$, it is clear that
$1,k'$ and $3$ are pairwise distinct. As
$\Gamma(w_2^{n'-1}w_1)(k')=k''\neq \theta$, we get that $\Gamma(w_1)(k')=\alpha
\neq \theta$. As $(1,3,2,\theta) \subseteq \Gamma(w_1)$ and because
$1,k',3$ are pairwise distinct positive integers, we get that $\alpha \not
\in \{1,2,3\}$.  We claim that $1,k',3$ and $k''$ are pairwise
distinct and thus that $S$ is a semigroup of type $U_{5}$, yielding a contradiction. 
Indeed, for otherwise, $k''=3$ or $k''=k'$ or $k''=1$. The former is
excluded as it implies that  $\alpha=  3$. If $k''=k'$, then $\alpha=2$, a contradiction. If
$k''=1$ then $(\alpha,3)\subseteq
\Gamma(w_1w_2^{n'-1}w_1w_2^{2n'-1})$ and thus $S$ is of type $U_3$, again a
contradiction. This proves the claim. 
Finally, if $\Gamma(w_2^{n'-1}w_1)(k')=\theta$, we get that $(1,3,k',
\theta) \subseteq \Gamma(w_2^{n'-1}w_1)$. Clearly, $(3)(1,
k',\theta) \subseteq \Gamma(w_2^{n'})$. However, this  contradicts with our assumptions.
This final contradiction shows that indeed this considered case does not occur.
\end{proof}



\section{Main result and examples}\label{main Theorem}

We are now in a position to state and prove the main result.

\begin{thm} \label{main-theorem}
Let $S$ be a finite \C\ semigroup. Then $S$ is either a Schmidt
group or a semigroup of type $U_{1}$, $U_{2}$, $U_{3}$, $U_4$ or $U_{5}$.
In particular, the semigroups $S$ of type $U_m$, with $3\leq m \leq 5$,  are
generated by four elements, they have a two-generated subsemigroup
$T$ and an ideal $M=\mathcal{M}^{0}(G,n,n;I_{n})$ (with $G$ a
nilpotent group) such that
 $$S=M\cup T,$$  and there exists a representation

  $$\Gamma : S\longrightarrow \mathcal{T}_{\{1, \ldots, n\} \cup \{\theta\}} ,$$ 
such  that, for all $s\in S$,
\begin{enumerate}
\item $\Gamma (s) (\theta )=\theta$,
\item $\Gamma (s)$ is injective when
restricted to $\{ 1 ,\ldots, n\} \backslash \Gamma (s)^{-1}(\theta
)$,
\item
$\abs{\Gamma^{-1}(\theta )}=1$,
\item if $T$ has a zero element, say $\theta_{T}$, then $\theta_{T}=\theta$ (the zero of $S$).
\end{enumerate}

Furthermore, $\Gamma (S)$ also is a \C\ semigroup.
\end{thm}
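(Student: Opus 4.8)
The plan is to obtain the dichotomy directly from the lemmas already proved, and then to verify the four properties of $\Gamma$ and the minimality of $\Gamma(S)$ almost for free from the structure these lemmas exhibit. By Lemma~\ref{starter}, a finite \C\ semigroup $S$ is a \C\ group (hence a Schmidt group), or a two-element semigroup of left or right zeros (type $U_1$ or $U_2$), or it has a proper ideal $M=\mathcal{M}^0(G,n,n;I_n)$ with $G$ nilpotent and $n\geq 2$. In the last case we are under the running hypothesis of Section~\ref{non-nil}, so $\Gamma$ is defined and Lemma~\ref{nilpotency} leaves exactly the orbit configurations (i)--(iii); configuration (i) yields type $U_3$ by Lemma~\ref{nilpotency-case1}, and if $S$ is not of type $U_3$ then Lemma~\ref{nilpotency-case2} forces type $U_4$ or $U_5$. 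The ``in particular'' data are then read off: Lemma~\ref{nilpotency-case1} gives for $U_3$ the two-generated presentation $S=\langle (g;i,j),u\rangle$, the (cyclic, hence two-generated) $T=\langle u\rangle$ and $M=\mathcal{M}^0(G,2,2;I_2)$ with $S=M\cup T$; Lemma~\ref{nilpotency-case2} gives for $U_4,U_5$ the four-generator presentations $S=\langle (g;1,1),(g';2,3),x_1,x_2\rangle$ and $S=\langle (g;k_1,k_4),(g';k_3,k_2),v_1,v_2\rangle$, the two-generated $T$ and the ideal $M$ with $S=M\cup T$. Properties (1) and (2) need no work: (1) is part of the definition of $\Gamma$ and (2) is the injectivity of $\Gamma(s)$ on $\{1,\dots,n\}\setminus\Gamma(s)^{-1}(\theta)$ proved just after that definition.

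The substantive point is property (3). Since $P=I_n$ and $M$ is an ideal, $\Gamma(s)$ is the constant map $\theta$ exactly when $sM=\{\theta\}$; hence $N:=\Gamma^{-1}(\theta)=\{s\in S:sM=\{\theta\}\}$ is an ideal, because $(ts)M=t(sM)=\{\theta\}$ and $(st)M=s(tM)\subseteq sM=\{\theta\}$ for all $t\in S$. Moreover $N\cap M=\{\theta\}$, as any nonzero $(g;i,j)\in M$ satisfies $(g;i,j)(h;j,k)=(gh;i,k)\neq\theta$. Now suppose $N\neq\{\theta\}$; then $N$ is a \emph{proper} ideal (since $M\not\subseteq N$) with $|N|\geq 2$, so $S/N$ is a proper Rees factor and is nilpotent by minimality of $S$. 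But the non-nilpotency witness exhibited in the proof of Lemma~\ref{nilpotency} consists of \emph{distinct nonzero} elements $s_1,s_2\in M$ and $w_1,\dots,w_h\in S^1$ with $s_1=\lambda_h(s_1,s_2,w_1,\dots,w_h)$, $s_2=\rho_h(s_1,s_2,w_1,\dots,w_h)$. Because $s_1,s_2\in M\setminus\{\theta\}$ and $M\cap N=\{\theta\}$, their images in $S/N$ remain distinct and the two identities survive in the quotient, so Lemma~\ref{finite-nilpotent} makes $S/N$ non-nilpotent, a contradiction. Hence $N=\{\theta\}$, which is (3).

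For property (4) I would use that a zero of a semigroup, if it exists, is unique. For $U_3$ the subsemigroup $T=\langle u\rangle$ is a nontrivial cyclic group, so it has no zero and (4) is vacuous. For $U_4$ the relation $x_1^3=\theta$ gives $\theta\in T$, and since $\theta$ absorbs all of $S$ it is a zero of $T$, whence uniqueness forces $\theta_T=\theta$. For $U_5$ the same conclusion follows once $\theta\in T$; equivalently, by (3), it suffices that some word in $v_1,v_2$ has $\Gamma$-image $\theta$. If instead $T$ had a zero $\theta_T\neq\theta$, then $\Gamma(\theta_T)$ would be a nonzero idempotent partial permutation fixing a nonempty set $F\subseteq\{1,\dots,n\}$, and the identities $v_i\theta_T=\theta_T=\theta_T v_i$ would force every $\Gamma(t)$ ($t\in T$) to fix $F$ pointwise and act on it by $1_G$; deleting such a fixed row/column then yields the subsemigroup $\mathcal{M}^0(G,n-1,n-1;I_{n-1})\cup T$, which still carries a non-nilpotency witness, contradicting minimality. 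So $\Gamma(\theta_T)=\theta$ and, by (3), $\theta_T=\theta$. I expect this $U_5$ case to be the main obstacle: unlike the explicit $x_1^3=\theta$ for $U_4$, one must genuinely rule out a common periodic point of $\Gamma(v_1),\Gamma(v_2)$, and the bookkeeping of the row/column deletion (in particular the symmetric statement on columns needed to keep $\mathcal{M}^0(G,n-1,n-1;I_{n-1})\cup T$ closed) is delicate.

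Finally, $\Gamma(S)$ is \C. It is non-nilpotent because the identities (\ref{not-nilp1})--(\ref{not-nilp2}) (and their analogues for $U_4,U_5$) are Malcev relations holding among \emph{distinct} elements of $\Gamma(S)$, so Lemma~\ref{finite-nilpotent} applies to $\Gamma(S)$ directly. For a proper subsemigroup $U\subsetneq\Gamma(S)$, the preimage $\Gamma^{-1}(U)$ is a proper subsemigroup of $S$, hence nilpotent, and $U=\Gamma(\Gamma^{-1}(U))$ is its homomorphic image, hence nilpotent. For a proper ideal $\bar I\subsetneq\Gamma(S)$ (so $|\bar I|\geq 2$), property (3) guarantees $|\Gamma^{-1}(\bar I)|\geq 2$, so $\Gamma^{-1}(\bar I)$ is a proper ideal of $S$ of size at least $2$; the induced surjection $S/\Gamma^{-1}(\bar I)\to\Gamma(S)/\bar I$ then shows the Rees factor $\Gamma(S)/\bar I$ is nilpotent. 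Thus $\Gamma(S)$ is \C, and note that this last step is exactly where property (3) is indispensable.
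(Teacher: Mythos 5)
Your proposal is correct and follows essentially the same route as the paper: the dichotomy is read off from Lemmas~\ref{starter}, \ref{nilpotency}, \ref{nilpotency-case1} and \ref{nilpotency-case2}, property (3) comes from $\Gamma^{-1}(\theta)$ being an ideal whose Rees quotient is still non-nilpotent, and property (4) is obtained by the same fixed-point/row-deletion contradiction. The ``delicate bookkeeping'' you worry about for $U_5$ is precisely the paper's (uniform) argument and goes through without trouble: $\Gamma(\theta_T)(i)=i$ forces $\Gamma(t)(i)=i$ for all $t\in T$, injectivity of each $\Gamma(t)$ off $\Gamma(t)^{-1}(\theta)$ then gives $M'T,TM'\subseteq M'$ on both sides, and the witness from Lemma~\ref{nilpotency} avoids the index $i$ because $\Gamma(w_1)$ moves all of $l,m,m',k,k'$, so $M'\cup T$ is a proper non-nilpotent subsemigroup.
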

\begin{proof}

All parts, except the items (3) and (4), follow  at once from Lemma~\ref{starter},
Lemma~\ref{nilpotency}, Lemma~\ref{nilpotency-case1} and
Lemma~\ref{nilpotency-case2}.
Part (3)
follows from the fact that $\Gamma^{-1}(\theta )$ is an ideal of $S$
and $S/\Gamma^{-1}(\theta )$ is not nilpotent.

To prove part (4), assume $T$ has a zero element, say $\theta_{T}$. We prove by contradiction that $\theta_{T}=\theta$. So suppose that $\theta_T\neq \theta$. Then, by part (3),
$\Gamma(\theta_T)\neq \theta$. Hence,  there exists $i$ between $1$ and $n$ such that $\Gamma(\theta_T)(i)\neq \theta$.
Now let $t \in T$. We have $$\theta_T(1_G;i,i)=(\Psi(\theta_T)(i) ; \Gamma(\theta_T)(i),i)$$
and
\begin{eqnarray*}
\theta_Tt(1_G;i,i)&=&\theta_T(\Psi(t)(i);\Gamma(t)(i),i)\\
 &=&(\Psi(\theta_T)(\Gamma(t)(i))\Psi(t)(i);\Gamma(\theta_T)(\Gamma(t)(i)),i).
 \end{eqnarray*}
Because $\theta_T t= \theta_T$ we obtain that  $\Gamma(\theta_T)(\Gamma(t)(i))=\Gamma(\theta_T)(i)$. Now as $\Gamma(\theta_T)(i) \neq \theta$, $\Gamma(t)(i)=i$. Therefore, for every $t\in T$, we have $(i) \subseteq \Gamma(t)$.  Because $\Gamma((g;\alpha,\beta))=(\beta,\alpha,\theta)$ for every $(g;\alpha,\beta) \in M$, it follows that  $M \cap T = \emptyset$.
Let $M'=\mathcal{M}^{0}(G,\{1,\ldots,i-1, i+1, \ldots,n\},\{1,\ldots,i-1, i+1, \ldots,n\};I_{n-1})$. Since $(i) \subseteq \Gamma(t)$ and $\Gamma(t)$ restricted to  $\{ 1, \ldots,
n\} \backslash \Gamma(t)^{-1}(\theta)$ is injective for every $t \in T$, we get that $M'T, TM' \subseteq M'$
and thus $M' \cup T$ is a subsemigroup of $S$.
Lemma~\ref{nilpotency}  implies that there
exist elements $w_1$ and $w_2$ of $T$  such that
$$(m,l) \subseteq \Gamma(w_1), (m)(l) \subseteq \Gamma(w_2) $$
$$\mbox{or~} (\ldots, m,l,m', \ldots) \subseteq \Gamma(w_1),  (l)(\ldots, m,
m',\ldots)\subseteq \Gamma(w_2)$$
$$\mbox{or~} [\ldots, k,m, \ldots, l,k', \ldots]\sqsubseteq \Gamma(w_1), [\ldots, l, m, \ldots, k, k', \ldots] \sqsubseteq \Gamma(w_2)$$
for  pairwise distinct numbers $l, m, m',k$ and $k'$ between $1$
and $n$.
As $\Gamma(w_1)(o)$ $\neq o$ for $o \in \{l, m, m',k,k'\}$, $i \not \in \{l, m, m',k,k'\}$. Hence $M' \cup T$ is not nilpotent. As $T \cap M = \emptyset$ and $M \neq M'$, $S \neq M' \cup T$ and this is in contradiction with $S$ being \C.
\end{proof}

The theorem shows that finite \C\ semigroups that are not a group belong to five classes.
In order to get a complete classification, a remaining problem is to determine which semigroups in these classes are actually \C.  In particular, one has to determine when precisely a union $M\cup T$ of an inverse semigroup $M=\mathcal{M}^{0}(G,n,n;I_{n})$ (with $G$ a
nilpotent group) and a two-generated semigroup $T$ is \C. 
One might expect that the easiest case to deal with is when $M$ and $T$ are $\theta$-disjoint, i.e.
the only possible joint element is the zero element $\theta$. In Corollary~\ref{main-cor} we show  that every finite \C\ semigroup  which is of type $U_3$, $U_4$ or $U_5$ is an epimorphic image of such a semigroup.
However, not every semigroup of type $U_4$ or $U_5$ that is a $\theta$-disjoint union of $M$ and $T$ is \C.
Next we give  examples of  \C\ semigroups of type  $U_5$  for which the maximal subgroups of $M$ are not trivial. We finish by constructing an infinite  class of \C\ semigroups of type $U_5$ with $n\geq 5$ and $G$ the trivial group.

Note that in general the subsemigroups $T$ and $M$  of a \C\
semigroup $U_{m}$ (listed in Theorem~\ref{main-theorem}) are not
$\theta$-disjoint ($\theta$-disjoint means that if there is a common element then it is $\theta$).
We now show that $U_{m}$ (with $3\leq n \leq 5$) is an epimorphic image of a
semigroup built on $\theta$-disjoint semigroups.

Let $T$ be a semigroup with a zero $\theta_T$ and let $M$ be a nilpotent regular Rees matrix semigroup $\mathcal{M}^0(G,n,n;I_n)$.
Let $\Gamma$ be a
representation of $T$ to the full transformation semigroup
$\mathcal{T}_{\{ 1, \ldots, n\} \cup \{\theta\}}$ such that for
every $t\in T$, $\Gamma (t) (\theta ) =\theta$, $\abs{\Gamma^{-1}(\theta
)}\leq 1$ (as agreed before, by $\theta$ we also denote the constant
map onto $\theta$),
 $\Gamma(t)$ restricted to  $\{ 1, \ldots, n\} \backslash \Gamma(t)^{-1}(\theta)$ is injective and $\Gamma (\theta_T)  =\theta$.
Further, for every
$t\in T$, let
$$\Psi(t): \{ 1, \ldots, n\} \cup
\{\theta\}\rightarrow G^{\theta}$$ be a map (as considered in
(\ref{map-psi})) such that $\Psi(t)(i)\neq \theta$ if and only if
$\Gamma(t)(i) \neq \theta$ and
$\Psi (t_{1}t_{2}) =(\Psi (t_{1}) \circ \Gamma (t_{2})) \, \Psi (t_{2})$ for every $t_{1},t_{2}\in T$.

We define a semigroup denoted by
$$S=\mathcal{M}^{0}(G,n,n;I_{n}) \cup_{\Psi}^{\Gamma} T.$$
As sets this is the $\theta$-disjoint union of $\mathcal{M}^{0}(G,n,n;I_{n})$ and $T$ (i.e. the disjoint union with the zeros identified).
The multiplication is such
that $T$ and $M$ are subsemigroups, 
$$t \, (g;i,j) = \left\{ \begin{array}{ll}
                              (\Psi (t)(i) g; \Gamma (t)(i),j) & \mbox{ if } \Gamma (t)(i) \neq \theta\\
                              \theta & \mbox{ otherwise}
                          \end{array} \right.
                          $$
and
$$ (g;i,j) t = \left\{ \begin{array}{ll}
    (g\Psi(t)(j');i,j') & \mbox{  if }  \Gamma (t)(j')=j\\
    \theta &\mbox{ otherwise }
    \end{array} \right.
    $$
It can be easily verified that $S$ is associative.

Note that if $G=\{e\}$, then $\Psi(t)(i)=e$ if and only if $\Gamma(t)(i) \neq \theta$. In this case we denote $\Psi$ simply as  $id$.

It follows from
Theorem~\ref{main-theorem} (and its proof)
that the \C\ semigroup S of type $U_{m}$ (with $3\leq n\leq 5$) is an
epimorphic image of a semigroup of the type
$\mathcal{M}^{0}(G,n,n;I_{n}) \cup_{\Psi}^{\Gamma} T$, with $G$ a
nilpotent group  and $T$ a two-generated nilpotent semigroup with a zero.
%

\begin{cor} \label{main-cor} Every finite \C\ semigroup $S$ is an  epimorphic image of one of the following semigroups:
\begin{enumerate}
\item a Schmidt group,

\item $U_{1}=\{ e,f\}$ with $e^{2}=e$, $f^{2}=f$, $ef=f$ and
$fe=e$,

\item $U_{2}=\{ e,f\}$ with $e^{2}=e$, $f^{2}=f$, $ef=e$ and
$fe=f$,

\item $\mathcal{M}^{0}(G,2,2;I_{2}) \cup_{\Psi}^{\Gamma} T$ such that $T=\langle
u \rangle \cup \{\theta\}$ with $\theta$ the zero of $S$, $u^{2^{k}}=1$ the identity of $T \backslash \{\theta\}$ (and of $S$) and $\Gamma (u)= (1,2)$.
\item $\mathcal{M}^0(G,3,3;I_{3}) \cup_{\Psi}^{\Gamma}
\langle w_1, w_2\rangle$, with $\Gamma(w_1)=(2,1,3,\theta)$ and
$\Gamma(w_2)=(2,3,\theta)(1)$,
$w_{2}w_{1}^{2}=w_{1}^{2}w_{2}=w_{1}^{3}=w_{2}w_{1}w_{2}= \theta$.
\item
 $\mathcal{M}^0(G,n,n;I_{n}) \cup_{\Psi}^{\Gamma} \langle v_1, v_2\rangle$, with
$$[\ldots, k, m, \ldots, k',m', \ldots]\sqsubseteq \Gamma(v_1), [\ldots, k, m', \ldots, k', m, \ldots] \sqsubseteq \Gamma(v_2)$$
for pairwise  distinct numbers $k, k', m$ and $m'$ between $1$ and
$n$, there do not exist distinct numbers $l_1$ and $l_2$ between $1$ and $n$ such that $(l_1,l_2)\subseteq \Gamma(x)$
for some  $x\in \langle v_1, v_2 \rangle$ 
and there do not exist pairwise distinct numbers $o_1, o_2$ and $o_3$ between $1$ and $n$ such that $(o_2,o_1,o_3,\theta)$ $\subseteq \Gamma(y_1)$, $(o_2,o_3,\theta)(o_1)\subseteq \Gamma(y_2)$  for some  $y_1,y_2\in \langle v_1, v_2 \rangle$.
\end{enumerate}

\end{cor}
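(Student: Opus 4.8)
The plan is to prove Corollary~\ref{main-cor} by combining Theorem~\ref{main-theorem} with the construction of the $\theta$-disjoint union $\mathcal{M}^{0}(G,n,n;I_{n}) \cup_{\Psi}^{\Gamma} T$ introduced just above the statement. By Theorem~\ref{main-theorem}, a finite \C\ semigroup $S$ is either a Schmidt group, or of type $U_1$ or $U_2$ (in which cases $S$ is itself one of the listed semigroups and the identity map is the required epimorphism), or of type $U_m$ for $3\leq m\leq 5$. So the real content is to handle the last three types, and for each I would exhibit a $\theta$-disjoint semigroup $\widetilde{S}=\mathcal{M}^{0}(G,n,n;I_{n})\cup_{\Psi}^{\Gamma}T$ of the prescribed shape together with a surjective homomorphism $\widetilde{S}\to S$.

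First I would set up the data for the epimorphism. Given $S=M\cup T$ of type $U_m$ with $M=\mathcal{M}^{0}(G,n,n;I_{n})$ and $T=\langle w_1,w_2\rangle$ (or $\langle u\rangle$ in the $U_3$ case), Theorem~\ref{main-theorem}(4) guarantees that if $T$ has a zero $\theta_T$ then $\theta_T=\theta$, and parts (1)--(3) guarantee that $\Gamma$ is well-defined on $S$ with $\abs{\Gamma^{-1}(\theta)}=1$ and injective off the kernel. The maps $\Gamma$ and $\Psi$ restricted to $T$ satisfy exactly the hypotheses required by the $\cup_{\Psi}^{\Gamma}$ construction, namely $\Gamma(t)(\theta)=\theta$, $\abs{\Gamma^{-1}(\theta)}\leq 1$, injectivity of $\Gamma(t)$ off $\Gamma(t)^{-1}(\theta)$, and the cocycle identity $\Psi(t_1t_2)=(\Psi(t_1)\circ\Gamma(t_2))\,\Psi(t_2)$ established earlier in Section~\ref{non-nil}. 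I would then form the abstract $\theta$-disjoint semigroup $\widetilde{S}=M\cup_{\Psi}^{\Gamma}\overline{T}$, where $\overline{T}$ is an abstract copy of $T$ (with a freshly adjoined zero if $T$ has none), and define $\varphi:\widetilde{S}\to S$ to be the identity on $M$ and the canonical projection $\overline{T}\to T\subseteq S$ on the $T$-part, sending the adjoined zero to $\theta$.

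Next I would verify that $\varphi$ is a homomorphism and surjective. Surjectivity is immediate since $\varphi$ hits all of $M$ and all of $T$, and $S=M\cup T$. For the homomorphism property, the crucial point is that the multiplication rules defining $\cup_{\Psi}^{\Gamma}$ for products $t\,(g;i,j)$ and $(g;i,j)\,t$ are precisely the formulas that hold in $S$ itself: the earlier remarks in Section~\ref{non-nil} show that $t\,(g;i,j)=(\Psi(t)(i)g;\Gamma(t)(i),j)$ when $\Gamma(t)(i)\neq\theta$ and equals $\theta$ otherwise, and symmetrically for right multiplication. Thus $\varphi$ respects mixed products by construction, respects products within $M$ (where it is the identity) and within $\overline{T}$ (where it is the quotient by the relation identifying $\overline{T}$'s possible two zeros). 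Finally I would confirm that each target semigroup matches the shape listed in items (4)--(6): in the $U_3$ case $T=\langle u\rangle\cup\{\theta\}$ with $u^{2^k}=1$ and $\Gamma(u)=(1,2)$ as in Lemma~\ref{nilpotency-case1}; in the $U_4$ case the relations $w_2w_1^2=w_1^2w_2=w_1^3=w_2w_1w_2=\theta$ and the cycle structure come from Lemma~\ref{nilpotency-case2}(1); and in the $U_5$ case the cycle conditions and the two exclusion clauses (no transposition $(l_1,l_2)\subseteq\Gamma(x)$, and no $U_4$-type pattern) are exactly the defining conditions of type $U_5$ transported through $\Gamma$.

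The main obstacle, and the step deserving the most care, is checking that the $T$-part $\overline{T}$ genuinely is a two-generated nilpotent semigroup with a zero of the required isomorphism type, and that adjoining a zero (when $T$ had none) does not disturb the construction; here one must ensure the cocycle condition on $\Psi$ extends to the adjoined zero with $\Gamma(\theta_{\overline{T}})=\theta$ and $\Psi(\theta_{\overline{T}})=\theta$, which is forced by $\abs{\Gamma^{-1}(\theta)}\leq 1$ and the compatibility $\Psi(t)(i)\neq\theta\iff\Gamma(t)(i)\neq\theta$. Since Lemma~\ref{nilpotency-lemma}(2) already shows $T$ is nilpotent in the $U_4$ case and the analogous nilpotency holds for $U_3$ and $U_5$, the remaining verification is routine once $\varphi$ is shown to be a well-defined surjective homomorphism, and the corollary follows.
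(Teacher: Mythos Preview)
Your proposal is correct and follows essentially the same approach as the paper: invoke Theorem~\ref{main-theorem} to reduce to the $U_m$ types, then use the $\Gamma$, $\Psi$ data on $T$ (as developed in Section~\ref{non-nil}) to form the $\theta$-disjoint model $\mathcal{M}^{0}(G,n,n;I_{n})\cup_{\Psi}^{\Gamma}\overline{T}$ and the obvious projection onto $S$. The paper in fact offers no separate proof of the corollary beyond the sentence preceding it (``It follows from Theorem~\ref{main-theorem} (and its proof) \ldots''), so your write-up is a careful unpacking of that claim, including the zero-adjunction in the $U_3$ case and the verification that the mixed-product formulas in $\cup_{\Psi}^{\Gamma}$ match those holding in $S$.
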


{\it An example of a semigroup of type $U_4$ that is not \C.}\\
Consider the following semigroup
  $$S= \mathcal{M}^0(\{1_G,g \},3,3;I_{3})\cup^{\Gamma}_{id} \langle w,v
\rangle$$ with  $v^2=v^3$, $wv^2=wv$, $vw=v^2w$, $w^2=wvw=wv^2w$,
$vw^{2}=w^{2}v=w^{3}=vwv= \theta$,
$\Gamma(w) = (2,1,3,\theta)$ and $\Gamma(v) = (2,3,\theta)(1)$. 
Clearly,
 $\mathcal{M}^0(\{1_G\},3,3;I_{3}$ $)\cup^{\Gamma}_{id}
\langle w,v \rangle$ is a proper semigroup. The latter is \C\ and thus  $S$ is not \C.

{\it An example of a finite \C\ semigroup of type $U_5$  with trivial maximal subgroups in $M$.}\\
Let
$$S=\mathcal{M}^0(G,4,4;I_{4})\cup^{\Gamma}_{\Psi} \langle w,v
\rangle$$ with $G=\{1_G,g\}$ a cyclic group of order 2, $w^{2}=v^{2}=wv=vw=\theta,$
$$\Gamma(w) =
(4,1,\theta)(3,2,\theta) \mbox{ and }  \Gamma(v) = (4,2,\theta)(3,1,\theta),$$
$$\Psi(w)(4)=\Psi(w)(3)=\Psi(v)(4)=1,\;\;  \Psi(v)(3)=g \mbox{  and } \langle
w,v\rangle=\{w,v,\theta\}.$$
Since
$$\Gamma((1_{G};3,1))=\lambda_2(\Gamma((1_{G};3,1)),\Gamma((1_{G};4,2)),\Gamma(w),\Gamma(v))$$
and
$$\Gamma((1_{G};4,2))=\rho_2(\Gamma((1_{G};3,1)),\Gamma((1_{G};4,2)),\Gamma(w),\Gamma(v)),$$
we obtain that $S$ is not nilpotent.
Suppose that a subsemigroup $S'$
of $S$ is not
nilpotent. By Lemma~\ref{finite-nilpotent}, there exists a positive
integer $m$, distinct  elements $x, y\in S'$ and elements $w_{1}, w_{2}, \ldots,
w_{m}\in S'^1$ such that $x = \lambda_{m}(x, y, w_{1}, w_{2},
\ldots, w_{m})$ and  $y = \rho_{m}(x, y, w_{1}, w_{2}$ $, \ldots,
w_{m})$. As  $\langle w, v\rangle$ is nilpotent,
$$\{x, y, w_{1}, w_{2}, \ldots, w_{m}\} \cap \mathcal{M}^0(G,4,4;I_{4})
\neq \emptyset $$ and because $\mathcal{M}^0(G,4,4;I_{4})$ is an
ideal of $S$, $x$ and $y$ are nonzero elements in $\mathcal{M}^0(G,4,4;I_{4})$.
Since $S'$ is not nilpotent and $\mathcal{M}^0(G,4,4;I_{4})$ is
nilpotent, at least one element of the set $\{w_1, \ldots, w_m\}$
is not in $\mathcal{M}^0(G,4,4;I_{4})$. As before, without loss of generality, we may suppose that $w_1 \not
\in \mathcal{M}^0(G,4,4;I_{4})$. Write $x=(g_1 ;n_1,n_2)$ and
$y=(g_2;n_3,n_4)$, for some $1\leq n_1,n_2,n_3,n_4 \leq 4$ and $g_1,g_2
\in G$.

If $w_1 =w$ then $\{(n_1,n_2),(n_3,n_4)\}=\{(3,1),(4,2)\}$ and it
can be easily verified that $w_2=v$.

It also is easily verified that  $\mathcal{M}^0(\{1_G\},4,4;I_{4})$ is a subsemigroup of 
  $$\langle \Gamma((g_1 ;3,1)),\, \Gamma((g_2 ;4,2)),\,  \Gamma(w),\,  \Gamma(v)\rangle .$$
So,
$\mathcal{M}^0(\{1_G\},4,4;I_{4}) \subseteq \Gamma(S')$. Therefore, for every pair $1 \leq \alpha, \beta \leq 4$, there  exists an element $p \in S'$ such that $\Gamma(p)=(\beta,\alpha,\theta)$. As $\Gamma(w) =
(4,1,\theta)(3,2,\theta)$ and $\Gamma(v) = (4,2,\theta)(3,1,\theta)$ and $\langle w, v\rangle= \{w,v,\theta\}$, we obtain that there exists an element $h\in\{1_G,g\}$ such that $p=(h;\alpha,\beta)$.

If $S \neq S'$ then there exists an element $(k;i,j) \in \mathcal{M}^0(G,4,4;I_{4})$ such that $(k;i,j)\not\in S'$.
Now suppose that $(n_1,n_2)=(3,1)$ and $(n_3,n_4)=(4,2)$. Then $$v(g_1 ;3,1)v=(gg_1g ;1,3), \; \; w(g_2 ;4,2)w=(g_2 ;1,3), $$
$$v(g_1 ;3,1)w=(gg_1 ;1,4), \; \; w(g_2 ;4,2)v=(g_2 ;1,4). $$
Since $g_1,g_2 \in\{1_G,g\}$ we get that
both $(1_G;1,3)$ and $(g;1,3)$,  or both $(1_G;1,4)$ and $(g;1,4)$ are  in $S'$.
Suppose that both $(1_G;1,3)$ and $(g;1,3)$ are in $S'$. As proved above, there exist elements $k_{1},k_{2}\in G$ such that  $(k_1;i,1),\,  (k_2;3,j)\in S'$. Then $(k_1;i,1)(1_G;1,3)(k_2;3,j),\; (k_1;i,1)(g;1,3)(k_2;$ $3,j)\in S'$ and thus $(k_1k_2;i,j),\; (k_1gk_2;i,j)\in S'$. Since $k,k_1, k_2 \in \{1_G,g\}$ we get that  $(k;i,j)$ is in $S'$, a contradiction.
So, $S=S'$ in this case.


Similarly,  $(1_G;1,4),\; (g;1,4)\in S'$ leads to  $S=S'$. Hence, we have proved that $S=S'$ if $w_{1}=w$. If $w_1=v$, then one proves in an analogous manner that $S=S'$.  So, it follows that  $S$ is a \C\
semigroup of type $U_{5}$.


{\it An infinite class of finite \C\ semigroups of type $U_5$.}\\
Let $n\geq 5$ and consider
$\mathcal{M}^{0}(\{ e \},n,n;I_{n})$ as a subsemigroup of the full
transformation semigroup (see (\ref{elements-of-ideal})) on $\{
1,\ldots, n\} \cup \{ \theta \}$, i.e. we identify $(e;i,j)$ with
the cycle $(j,i,\theta )$ if $i\neq j$ and $(e;i,i)$ with the
permutation $(i)$. Let
$$Y_{n} = \mathcal{M}^0(\{e\},n,n;I_{n})\cup^{\Gamma}_{id} \langle w,v
\rangle$$
 and 
$$\Gamma(w)=(2,3,\theta)(4,1,\theta), \; \Gamma(v)=(2,1,\theta)(n,n-1,
\ldots,5,4,3,\theta).$$
It can be easily verified that
$$\Gamma(v^pw^q) = \Gamma(w^k) =\Gamma(v^l) = \theta \mbox{ for } p,q \geq 1,\;  k
\geq 2, \; l\geq n-2,$$
$$\Gamma(w^qv^p) = \theta \mbox{ for } q \geq 2, p \geq 1,$$
$$\Gamma(wv^p) = (p+4,1,\theta) \mbox{ for } n-4 \geq p \geq 1,
\Gamma(wv^p) = \theta \mbox{ for } p > n-4 \mbox{ and}$$
$$\Gamma(awv^p) = \theta \mbox{ for } p \geq 1, a \in \langle w,
v\rangle.$$ Because $\abs{\Gamma^{-1}(\theta )}=1$ we thus obtain that  $v^pw^q = w^k =v^l = \theta$ for $p,q \geq 1,
k \geq 2,\; l\geq n-2,\;  w^qv^p = \theta$ for $q \geq 2, p \geq 1$ and $awv^p =
\theta $ for $p \geq 1, a \in \langle w, v\rangle$. So, $$\langle w,v \rangle =\{ w,v,\ldots,v^{n-3},wv, \ldots, wv^{n-4}, \theta\}$$ and
clearly $\langle w, v\rangle^n=\{\theta\}$. Therefore $\langle w, v\rangle$ is nilpotent.

We claim that $Y_n$ is \C.
To prove this, suppose that  $Y$ is a subsemigroup of $Y_{n}$ that is  not nilpotent. We need to prove that $Y=Y_{n}$.
As before, there exists a positive
integer $m$, distinct elements $x, y\in \mathcal{M}^0(\{e\},n,n;I_{n})$
and elements $w_{1},
w_{2}, \ldots, w_{m}\in Y^1$ with $w_{1}\not\in \mathcal{M}^0(\{e\},n,n;I_{n})$ such that $x = \lambda_{m}(x, y,
w_{1}, w_{2}, \ldots, w_{m})$, $y = \rho_{m}(x, y, w_{1}, w_{2}$
$, \ldots, w_{m})$.
Write $x=(e;n_1,n_2)$,
$y=(e;n_3,n_4)$ for some $1\leq n_1,n_2,n_3,n_4 \leq n$. Since $x
\neq y$, $(n_1,n_2) \neq (n_3,n_4)$.

Since $\Gamma(xw_1y)$ and $\Gamma(yw_1x)$ are nonzero, $\Gamma(w_1)(n_3)=n_2$ and
$\Gamma(w_1)(n_1)=n_4$. Now as $\Gamma(wv^p) = (p+4,1,\theta) \mbox{ for } n-4 \geq p \geq 1$ and $(n_1,n_2)\neq (n_3,n_4)$, $w_1 \notin \{wv, \ldots, wv^{n-4}\}$. Similarly $w_2 \notin \{wv, \ldots, wv^{n-4}\}$.
So, $w_{1}=w$ or  $w_{1}=v^{k}$ for some $n-3\geq k\geq 1$.

Suppose that $w_1 = v^k$ for some $1\leq k \leq n-3$.  Since $\Gamma(xw_1y)$ and
$\Gamma(yw_1x)$ are nonzero, $\Gamma(v^k)(n_3)$ $=n_2$ and
$\Gamma(v^k)(n_1)=n_4$. Hence $\lambda_1=(e;n_1,n_4),
\rho_1=(e;n_3,n_2)$. If $w_2=w$ then $\Gamma(w)(n_1)=n_2$ and
$\Gamma(w)(n_3)=n_4$. Since $(n_1,n_2) \neq (n_3,n_4)$ and $\Gamma(w)=(2,3,\theta)(4,1,\theta)$, $n_1=4,
n_2=1, n_3=2, n_4=3$ or $n_1=2, n_2=3, n_3=4, n_4=1$. Now as $\Gamma(v^k)(n_3)=n_2$,
$\Gamma(v^k)(n_1)=n_4$ and $\Gamma(v^k)(2)=\theta$ for $k > 1$,
$v^k=v$ and thus $Y=Y_n$. So we may assume that there exists $1\leq l\leq
n$ such that $w_2=v^l$. Similarly we have $\Gamma(v^l)(n_1)=n_2$ and
$\Gamma(v^l)(n_3)=n_4$. It can be easily verified that $n_4 = n_1 -
k= n_3 -l , n_2= n_1 - l =n_3 - k$. Then $k-l=l-k$ and thus $k=l$.
Hence $n_4=n_2, n_1=n_3$, a contradiction.

Finally suppose that $w_1=w$. As $\Gamma(w_1)(n_3)=n_2$,
$\Gamma(w_1)(n_1)=n_4$, $\Gamma(w)=(2,3,\theta)\, (4,1,\theta)$ and $(n_1,n_2) \neq (n_3,n_4)$, $\{x, y\} = \{(e;4,3),(e;2,1)\}$
and thus
$\rho_1, \lambda_1 \in \{(e;2,3),(e;4,1)\}$.
Now as $\Gamma(\rho_1w_2\lambda_1)$ and $\Gamma(\lambda_1w_2\rho_1)$ are nonzero, it follows that $(2,1,\theta)\subseteq
\Gamma(w_2),(4,3,\theta)\subseteq \Gamma(w_2)$. Since $\Gamma(v^k)(2)=\theta$ for $k > 1$ and $\Gamma(w)(2)=3$, one then obtains that $w_2=v$.
Therefore $Y=Y_{n}$.
It follows that indeed $Y_n$ is \C.



\end{document}